\DeclareMathOperator*{\diag}{\text{diag}}
\newcommand{\iu}{{\mathrm{\mathbf{i}}\mkern1mu}}
\newcommand{\scada}{\text{SCADA}}
\newcommand{\pmu}{\text{PMU}}
\newtheorem{definition}{Definition}
\newtheorem{theorem}{Theorem}
\newtheorem{lemma}[theorem]{Lemma}
\title{Power System Steady-State Estimation Revisited}
\author{Pavel Rytir, Ales Wodecki, Martin Malachov, Pavel Baxant, Premysl Vorac, Miloslava Chladova, Jakub Marecek}
\date{\today}
\begin{document}

\maketitle

\begin{abstract}
In power system steady-state estimation (PSSE), one needs
to consider
(1) the need for robust statistics, (2) the nonconvex transmission constraints,
(3) the fast-varying nature of the inputs, and the corresponding need to track optimal trajectories as closely as possible. In combination, these challenges have not been considered, yet. 
In this paper, we address all three challenges. The need for robustness (1) is addressed by using an approach based on the so-called Huber model. The non-convexity (2) of the problem, which results in first order methods failing to find global minima, is dealt with by applying global methods. One of these methods is based on a mixed integer quadratic formulation, which provides results of several orders of magnitude better than conventional gradient descent. Lastly, the trajectory tracking (3) is discussed by showing under which conditions the trajectory tracking of the SDP relaxations has meaning.
\end{abstract}

\section{Introduction}

Modern transmission systems are run by increasingly sophisticated energy management systems (EMS). 
Their functionality relies on the availability of an estimate of the state of the system.
The estimates also have an indispensable role in the situational awareness of human operators in the control room \cite{prostejovsky2019future}.
There is a long history of work on state estimation in power systems, dating back to \cite[e.g.]{schweppe1970powerA,schweppe1970powerB,schweppe1970powerC}, 
surveyed in numerous excellent reviews \cite[e.g.]{monticelli2000electric,abur2004power,huang2012state,kekatos2017psse,10007691,Khalili2022,cheng2023survey}. 

Here, we highlight three challenges that have not been considered in much detail previously. 
First, we stress the need for robust statistics. 
Considering the extensive instrumentation of modern power systems with sensors, one can allow for a certain fixed number of measurements to be ignored, but what measurements to ignore is a non-trivial question. 
The number of measurements that can be ignored depends on an NP-hard subproblem bounding the number of roots of a multi-homogeneous systems and is complicated further by the common practice of having multiple sensors at a single bus.  

Second, we illustrate the non-convex nature of the state-estimation problem, even without the robustness considerations. Indeed, one should utilize the transmission constraints in the alternating-current model, which can be modelled as non-convex polynomials \cite{Ghaddar14,molzahn2017survey}. Commonly considered linearizations may suffer substantial errors in practice \cite{cain2012history}. Building on the work of \cite{8855013,9030243}, we show that these errors can be arbitrarily large even in very simple, 2-bus systems.

Finally, we stress the need to track the trajectory of optimal solutions as closely as possible, with as little delay as possible. Indeed, even if one were able to solve the non-convex optimization problem providing robust estimator exactly, if this takes too long, it may not be particularly helpful. 

We present an approach that combines:
\begin{itemize}
\item robust statistics in the Huber model, implemented using mixed-integer programming,
\item robust solvers for the non-convex optimization problems, utilizing certain convexifications,
\item novel approaches to tracking a trajectory of optimal solutions to the convexifications.
\end{itemize}



\subsection{Notation}

\begin{tabular}{lp{6.2cm}}
    $\mathcal{N} \subset \mathbb{N}$ & The set of indices corresponding to the buses of the network \\
    $\mathcal{M} \subset \mathbb{N}$ & The set of indices corresponding to the branches of the network \\    
$\mathcal{G}=\left(\mathcal{N},\mathcal{M}\right)$ & The graph that describes the network \\
    $N$ & The number of measurements available \\   
    $v_k$ &  True complex voltage at bus $k \in \mathcal{N}$ \\
    $\theta_k$ & True phase angle of complex voltage at bus $k \in \mathcal{N}$ with respect to a fixed reference bus\\
    $v_k^\pmu$ & PMU measurement of complex voltage at bus $k \in \mathcal{N}$ \\
    $\theta_k^\pmu$ & PMU measurement of voltage phase angle at bus $k \in \mathcal{N}$ with respect to the fixed reference bus \\
    $\lvert v_k\rvert$ &  True voltage amplitude at bus $k \in \mathcal{N}$ \\
    $\lvert v_k\rvert^{\scada}$ &  SCADA voltage amplitude measurement at bus $k \in \mathcal{N}$ \\
    $p_{f,l}$ & True active power flow at \textit{from} end of branch $l \in \mathcal{M}$ \\
    $p_{t,l}$ & True active power flow at \textit{to} end of branch $l \in \mathcal{M}$ \\
    $q_{f,l}$ & True reactive power flow at \textit{from} end of branch $l \in \mathcal{M}$ \\
    $q_{t,l}$ & True reactive power flow at \textit{to} end of branch $l \in \mathcal{M}$ \\
    $p^\scada_{f,l}$ & SCADA active power measurement at \textit{from} end of  branch $l \in \mathcal{M}$\\
    $p^\scada_{t,l}$ & SCADA active power measurement at \textit{to} end branch $l \in \mathcal{M}$\\
    $q^\scada_{f,l}$ & SCADA reactive power measurement at \textit{from} end of  branch $l \in \mathcal{M}$\\
    $q^\scada_{t,l}$ & SCADA reactive power measurement at \textit{to} end branch $l \in \mathcal{M}$\\
    $p_{k}$ & True injected active power at bus $k \in \mathcal{N}$ \\
    $q_{k}$ & True injected reactive power at bus $k \in \mathcal{N}$\\
    $p_{k}^{\scada}$ & SCADA measured injected active power at bus $k \in \mathcal{N}$ \\
    $q_{k}^{\scada}$ & SCADA measured injected reactive power at bus $k \in \mathcal{N}$\\
    $i_{f,l}$ & True complex current injection at \textit{from} end of the line $l \in \mathcal{M}$\\    
    $i_{t,l}$ & True complex current injection at \textit{to} end of the line $l \in \mathcal{M}$
\end{tabular}

    \begin{tabular}{lp{6.2cm}}
     $\theta^i_{f,l}$ & True phase angle of the current injection at \textit{from} end of the branch $l \in \mathcal{M}$ with respect to the voltage at the reference bus\\
    $\theta^i_{t,l}$ & True phase angle of the current injection at the \textit{ to} end of the branch $l \in \mathcal{M}$ with respect to the voltage at the reference bus\\   
    $i_{f,l}^{\pmu}$ & PMU measured complex current at \textit{from} end of line $l \in \mathcal{M}$ with respect to the reference bus\\

    $i_{t,l}^{\pmu}$ & PMU measured complex current at \textit{to} end of line $l \in \mathcal{M}$ with respect to the reference bus\\ 
    $l_{f}, l_{t} \in \mathcal{N}$ & denote the source and destination buses for the line $l \in \mathcal{M}$, respectively \\
    $i^\text{loc}_{f,l}$ & True complex current at \textit{from} end of line $l \in \mathcal{M}$ with respect to bus $l_{f} \in \mathcal{N}$\\
    $i^\text{loc}_{t,l}$ & True complex current at \textit{to} end of line $l \in \mathcal{M}$ with respect to bus $l_{t} \in \mathcal{N}$\\
    $\left|i_{f,l}\right|$ & True current amplitude at \textit{from} end of line $l \in \mathcal{M}$\\
    $\left|i_{t,l}\right|$ & True current amplitude at \textit{to} end of line $l \in \mathcal{M}$\\
    $\left|i_{f,l}\right|^{\scada}$ & SCADA measured current amplitude at \textit{from} end of line $l \in \mathcal{M}$\\
    $\left|i_{t,l}\right|^{\scada}$ & SCADA measured current amplitude at \textit{to} end of line $l \in \mathcal{M}$\\
    $\varphi_{f,l}$ & True phase angle of the current at \textit{from} end of branch $l \in \mathcal{M}$ with respect to the voltage at bus $l_{f} \in \mathcal{N}$\\
    $\varphi_{t,l}$ & True phase angle of the current at \textit{to} end of branch $l \in \mathcal{M}$ with respect to the voltage at bus $l_{t} \in \mathcal{N}$\\

    $i_{k}$ & True complex current injection at bus $k \in \mathcal{M}$ with respect to the reference bus\\
    $i_{k}^{\pmu}$ & PMU measured complex current injection at bus $k \in \mathcal{N}$ with respect to the reference bus\\
    $\left|i_{k}\right|$ & True amplitude of current injection at bus $k \in \mathcal{N}$\\  
    $i_{k}^{\text{loc}}$ & True complex current injection at bus $k \in \mathcal{N}$ with respect to the voltage at bus $k \in \mathcal{N}$\\
    $\left|i_{k}\right|^{\scada}$ & SCADA measured amplitude of current injection at bus $k \in \mathcal{N}$\\
    $\varphi_{k}$ & True phase angle of the current injected at bus $k \in \mathcal{N}$ with respect to the voltage at bus $k$\\
    $\varphi^\scada_{k}$ & phase angle of the current injected at bus $k \in \mathcal{N}$ with respect to the voltage at bus $k$ calculated from SCADA measurements of injected active and reactive power \\
    
\end{tabular}

\section{Sensors and the Measurement Chain}\label{sec_sensors_and_meas}
Any method of phase estimation and subsequent management actions, stemming from a chosen method of optimization and algorithmic implementation, rely on sensory data obtained by querying available sensors. The quantities of interest are the following:
\begin{itemize}
    \item voltage magnitude at buses,
    \item synchronized phase of voltage at buses,
    \item magnitude of current flows at the both ends of a power line, which need not be the same due to losses, 
    \item synchronized phase of the current at both ends of a power line.
\end{itemize}
The solution of the power system state estimation problem (PSSE) gives the complex-valued voltage and current
\cite{steinmetz1893complex}, also known as phasors, for the entire network, given some of the above. The related key challenges include:
\begin{itemize}
    \item Some of the quantities are not available by direct sensory measurement. 
    \item The measurements of voltage and current can be obtained by a sensory measurement, but as with any measurement, this is subject to imprecisions stemming from the accuracy of the device measurements or the asynchronous nature in which the measurements may happen.
\end{itemize}

The state-of-the-art sensors available to get the full phasor data are synchronized phasor measurement units (PMU, \cite{phadke2002synchronized}). 
PMU units are synchronized using the GPS universal clock, providing ultimate synchronicity \cite{phadke2002synchronized}. 
To achieve the high precision that PMUs are known for, yoke or differential sensors are used to concentrate the field lines before measurement. The core of the sensor is made out of nonlinear magnetic materials, which exhibit magnetisation variations due to the current induced magnetic field. It has been documented that PMU's are the most precise of the instruments available on the market today \cite{bale20, prostejovsky2019future}.



In robust statistics, one assumes that a random process modeling the sensory readout mostly produces measurements that follow a certain structure; these measurements are called non-corrupt. However, some measurement outcomes may involve potentially unbounded errors; these are referred to as corrupted measurements. In the following, we denote $K$ as the set of measurement indices that correspond to non-corrupted measurements.

\subsection{SCADA Mesurements}
We describe measurement chains in similar way as in~\cite{Cheng2023ModelingOS}.

\begin{figure}[h]
    \centering
    \begin{tikzpicture}[scale=0.8, every node/.style={scale=0.8}]
    \node (voltage_true)  {$\lvert v_k\rvert$};
    \node (voltage1) [right=0.9 cm of voltage_true] {$\lvert v_k\rvert'$};
    \node (voltage2) [right=0.9 cm of voltage1] {$\lvert v_k\rvert''$};
    \node (voltage3) [right=1.7 cm of voltage2] {$\lvert v_k\rvert'''$};
    \node (voltage4) [right=0.9 cm of voltage3] {$\lvert v_k\rvert^\scada$};

    \draw [->] (voltage_true) -- node[above, align=center] {$e^V_{VT}$}  (voltage1);
    \draw [->] (voltage1) -- node[above, align=center] {$e^V_{CAB}$} (voltage2);
    \draw [->] (voltage2) -- node[above, align=center] {$e^V_{IED}$} (voltage3);
    \draw [->] (voltage3) -- node[above, align=center] {$e^V_{CN}$} (voltage4);
    
    \node (voltage_angle_true) [below=0.2 cm of voltage_true] {$\theta_k$};
    \node (voltage_angle1) [below=0.2 cm of voltage1] {$\theta_k'$};
    \node (voltage_angle2) [below=0.2 cm of voltage2] {$\theta_k''$};
    \draw [->] (voltage_angle_true) -- node[above, align=center] {$e^\theta_{VT}$}  (voltage_angle1);
    \draw [->] (voltage_angle1) -- node[above, align=center] {$e^\theta_{CAB}$}  (voltage_angle2);

    \node (current_true) [below=0.2 cm of voltage_angle_true] {$\lvert i_{f,l}\rvert$};
    \node (current1) [below=0.2 cm of voltage_angle1] {$\lvert i_{f,l}\rvert'$};
    \node (current2) [below=0.2 cm of voltage_angle2] {$\lvert i_{f,l}\rvert''$};
    \draw [->] (current_true) -- node[above, align=center] {$e^I_{CT}$}  (current1);
\draw [->] (current1) -- node[above, align=center] {$e^I_{CAB}$}  (current2);

    \node (current_angle_true) [below=0.2 cm of current_true] {$\theta^I_{f,l}$};
    \node (current_angle1) [below=0.2 cm of current1] {$\theta^{I'}_{f,l}$};
    \node (current_angle2) [below=0.2 cm of current2] {$\theta^{I''}_{f,l}$};
    \draw [->] (current_angle_true) -- node[above, align=center] {$e^{\theta^I}_{CT}$}  (current_angle1);
    \draw [->] (current_angle1) -- node[above, align=center] {$e^{\theta^I}_{CAB}$}  (current_angle2);

    \node (power_active3) [below=0.5 cm of voltage3] {$p_{f,l}'''$};
    \node (power_active4) [below=0.5 cm of voltage4] {$p_{f,l}^\scada$};
    \draw [->] (power_active3) -- node[above, align=center] {$e^P_{CN}$}  (power_active4);

    \node (power_active2) [left=0.3 cm of power_active3] {$p_{f,l}''$};
     \draw [->] (power_active2) -- node[above, align=center] {$e^P_{IED}$}  (power_active3);
    
    \node (power_reactive3) [below=0.5 cm of power_active3] {$q_{f,l}'''$};
    \node (power_reactive4) [below=0.5 cm of power_active4] {$q_{f,l}^\scada$};
 \draw [->] (power_reactive3) -- node[above, align=center] {$e^Q_{CN}$}  (power_reactive4);

 \node (power_reactive2) [left=0.3 cm of power_reactive3] {$q_{f,l}''$};
 \draw [->] (power_reactive2) -- node[above, align=center] {$e^Q_{IED}$}  (power_reactive3);

    \begin{scope}[on background layer]
    \node (pq) [box=gray!10, draw=black!50, fit={(power_reactive2) (power_active2)}] {};
    \end{scope}

    \draw [->] (voltage2) -- (pq);
    \draw [->] (voltage_angle2) -- (pq);
    \draw [->] (current2) -- (pq);
    \draw [->] (current_angle2) -- (pq);

    \end{tikzpicture}
    \caption{SCADA measurement chain}
    \label{fig:scada_measurement_chain}
\end{figure}
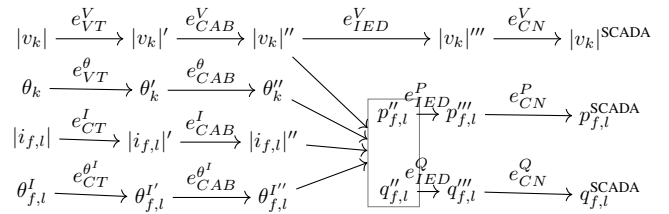

\subsubsection{SCADA Voltage amplitude measurement}

\begin{equation}
 \lvert v_k\rvert^\scada \approx \begin{cases} \lvert v_k\rvert(1+e^V_{VT})+e^V_{CAB}+ &   \\
  +e^V_{IED}+e^V_{CN} & \forall k \in K \\
 \textrm{ arbitrary } & \textrm{ otherwise }
 \end{cases}
\end{equation}
where $e^V_{VT}$ is an error introduced by the voltage transformer (VT). It consists of two types errors: systematic and random. Systematic errors may be introduced by non fully calibrated VT, as disscussed in the IEEE Std C57.13-2016. 
The absolute value of the sum of both parts in $e^V_{VT}$ is bounded from above by an upper bound depending on the class of the transformer. 

$e^V_{CAB}$ is an error introduced by control cables and burden resistors. We assume that this error follows a non-zero-mean Gaussian distribution.

$e^V_{IED}$ is an error introduced by an intelligent electronic device such as A/D and D/A converters. We will assume that these devices will introduce an error with zero-mean Gaussian distribution.

$e^V_{CN}$ is an error introduced by a communication network. When the results are transmitted over a network, data could get lost or delayed, so they will not arrive in time for a periodic stready-state estimation (SSE) computation. We assume that this error follows a Gaussian mixture model (GMM, \cite{mclachlan1988mixture,vsmidl2006variational}) distribution.


\subsubsection{SCADA Branch Power Measurements}
\label{sec:scada_branch_flow}

\begin{equation}
    \lvert v_k\rvert''=\lvert v_k\rvert(1+e^V_{VT})+e^V_{CAB}
\end{equation}
\begin{equation}
    \theta''_{f,l}=\theta_{f,l}+e^\theta_{VT}+e^\theta_{CAB}
\end{equation}
\begin{equation}
    \lvert i_{f,l}\rvert''=\lvert i_{f,l}\rvert(1+e^T_{CT})+e^I_{CAB}
\end{equation}
\begin{equation}
    \theta^{i''}_{f,l}=\theta^I_{f,l}+e^{\theta^I}_{CT}+e^{\theta^I}_{CAB}
\end{equation}

\begin{equation}
    p_{f,l}^\scada \approx \begin{cases}\lvert v_k\rvert''\lvert i_{f,l}\rvert''\cos{(\theta_k''-\theta^{I''}_{f,l})}+ & \\
    +e^P_{IED}+e_{CN}^P & \forall k \in K\\
    \textrm{ arbitrary } & \textrm{ otherwise }
    \end{cases}
\end{equation}

\begin{equation}
    q_{f,l}^\scada \approx \begin{cases}\lvert v_k\rvert''\lvert i_{f,l}\rvert''\sin{(\theta_k''-\theta^{i''}_{f,l})}+ & \\
    +e^Q_{IED}+e_{CN}^Q & \forall k \in K\\
    \textrm{ arbitrary } & \textrm{ otherwise }
    \end{cases}
\end{equation}
where the errors are analogous as in the SCADA voltage amplitude section.





\subsubsection{SCADA Injection Measurements}
are computed analogously as in the previous section.



\subsection{PMU Measurements}
\begin{figure}[h]
    \centering
    \begin{tikzpicture}
    \node (voltage_true)  {$\lvert v_k\rvert$};
    \node (voltage1) [right=0.9 cm of voltage_true] {$\lvert v_k\rvert'$};
    \node (voltage2) [right=0.9 cm of voltage1] {$\lvert v_k\rvert''$};

    \draw [->] (voltage_true) -- node[above, align=center] {$e^V_{VT}$}  (voltage1);
    \draw [->] (voltage1) -- node[above, align=center] {$e^V_{CAB}$} (voltage2);

    \node (voltage_angle_true) [below=0.2 cm of voltage_true] {$\theta_k$};
    \node (voltage_angle1) [below=0.2 cm of voltage1] {$\theta_k'$};
    \node (voltage_angle2) [below=0.2 cm of voltage2] {$\theta_k''$};
    \draw [->] (voltage_angle_true) -- node[above, align=center] {$e^\theta_{VT}$}  (voltage_angle1);
    \draw [->] (voltage_angle1) -- node[above, align=center] {$e^\theta_{CAB}$}  (voltage_angle2);


    \node (current_true) [below=0.2 cm of voltage_angle_true] {$\lvert i_{f,l}\rvert$};
    \node (current1) [below=0.2 cm of voltage_angle1] {$\lvert i_{f,l}\rvert'$};
    \node (current2) [below=0.2 cm of voltage_angle2] {$\lvert i_{f,l}\rvert''$};
    \draw [->] (current_true) -- node[above, align=center] {$e^I_{CT}$}  (current1);
\draw [->] (current1) -- node[above, align=center] {$e^I_{CAB}$}  (current2);

    \node (current_angle_true) [below=0.2 cm of current_true] {$\theta^I_{f,l}$};
    \node (current_angle1) [below=0.2 cm of current1] {$\theta^{I'}_{f,l}$};
    \node (current_angle2) [below=0.2 cm of current2] {$\theta^{I''}_{f,l}$};
    \draw [->] (current_angle_true) -- node[above, align=center] {$e^{\theta^I}_{CT}$}  (current_angle1);
    \draw [->] (current_angle1) -- node[above, align=center] {$e^{\theta^I}_{CAB}$}  (current_angle2);

    \node (voltage_phasor) [draw, right=0.4 cm of voltage2, align=center, yshift=-0.5cm] {Phasor\\Estimation\\(DFT)};
    \node (voltage3) [right=0.5 cm of voltage_phasor] {$v_k^\pmu$};

    \node (current_phasor) [draw, right=0.4 cm of current2, align=center, yshift=-0.5cm] {Phasor\\Estimation\\(DFT)};
    \node (current3) [right=0.5 cm of current_phasor] {$i_{f,l}^\pmu$};

    \draw[->] (voltage2) -- (voltage_phasor);
    \draw[->] (voltage_angle2) -- (voltage_phasor);

    \draw[->] (current2) -- (current_phasor);
    \draw[->] (current_angle2) -- (current_phasor);

    \draw[->] (voltage_phasor) -- (voltage3);

    \draw[->] (current_phasor) -- (current3);

    \end{tikzpicture}
    \caption{PMU measurement chain}
    \label{fig:scada_measurement_chain}
\end{figure}
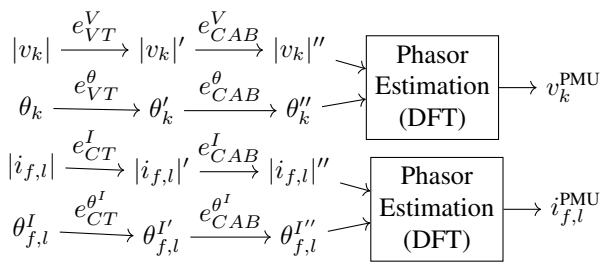

\subsubsection{PMU Voltage Phasor Measurements and Phase Angle Measurement}

\begin{equation}
 \lvert v_k^\pmu\rvert \approx \begin{cases} \lvert v_k\rvert(1+e^V_{VT})+e^V_{CAB} & \; \forall k \in K \\
 \textrm{ arbitrary } & \textrm{ otherwise }
 \end{cases}
\end{equation}

\begin{equation}
 \arg(v_k^\pmu)=\theta_k^\pmu \approx \begin{cases} \theta_k+e^\theta_{VT}+e^\theta_{CAB} & \; \forall k \in K \\
 \textrm{ arbitrary } & \textrm{ otherwise }
 \end{cases}
\end{equation}

where $e^V_{VT}$ and $e^V_{CAB}$ are the same errors as in the SCADA section.

Errors introduced by PMU unit: Sampling time errors, which are typically introduced by the frequency drift of oscilators;
off-nominal frequency errors due to changes in generation and load imbalances, 
GPS signal lost errors.
These errors lead to time-varying non-Gaussian distribution.



\subsubsection{PMU Current Phasor Measurements}

PMU can measure directly $I_{kl}$

\begin{equation}
 \lvert i_{f,l}^\pmu\rvert \approx \begin{cases} \lvert i_{f,l}\rvert(1+e^I_{CT})+e^I_{CAB} & \; \forall k \in K \\
 \textrm{ arbitrary } & \textrm{ otherwise }
 \end{cases}
\end{equation}

\begin{equation}
 \arg(i_{f,l}^\pmu)=\theta^{I,\pmu}_{f,l} \approx \begin{cases} \theta^I_{f,l}+e^{\theta^I}_{CT}+e^{\theta^I}_{CAB} & \; \forall k \in K \\
 \textrm{ arbitrary } & \textrm{ otherwise }
 \end{cases},
\end{equation}
where $e_{CT}^{I}$ denotes the current transformer error.




\section{Model}

\subsection{Power System State}\label{sec_power_voltage}





The state of an $n$-bus system in the rectangular representation is a vector $v\in\mathbb{C}^{n}$ of length $n$.
$$v=(v_1, v_2,\dots, v_n)^T$$
We assume that the real component of voltage $v_1$ at the reference bus is non-negative.

\subsection{Branch Model}
We use the pi-model of a branch~\cite{matpower}. See Figure~\ref{fig:branch}.
\begin{figure}[h]
    \centering
    \begin{circuitikz}[american, scale=0.8]
    \ctikzset{t/.style={
        transformer core,
        inductors/width=1.8, 
        inductors/coils=8, 
        quadpoles/transformer core/height=2.4}
    }           
    \draw (0,4) node[t, anchor=A1](T){};
    \draw ($(T.A2)!0.5!(T.B2)$) node{$N:1$};
    \path (T.A1) -- node[] (Vf) {$v_f$} (T.A2);
    \draw[->,>=stealth',shorten >=5pt]
    (Vf) edge[] (T.A1);
    \draw[shorten >=4pt] (Vf) -- (T.A2);

    \path (T.B1) -- node[] (Vfn) {$\frac{v_f}{N}$} (T.B2);
    \draw[->,>=stealth',shorten >=4pt]
    (Vfn) edge[] (T.B1);
    \draw[shorten >=4pt] (Vfn) -- (T.B2);

    \draw[->,>=stealth'] (T.A1)+(0, 0.4) -- node[above] {$i_f$} ++(0.7, 0.4);
    \draw[->,>=stealth'] (T.B1)+(-0.7, 0.4) -- node[above] {$i_fN^*$} ++(0, 0.4);

    \node [ocirc] at (T.A1){};
    \node [ocirc] at (T.A2){};
    \draw (T.B1) to[short] ++(1,0) coordinate (1);
    \draw (T.B2) to[short] ++(1,0) coordinate (2);

    \draw (1) to[generic=$\frac{y^G_{ij}}{2}$] (2);

    \draw (1) to[generic=$y_{ij}$] ++(4,0) coordinate (3);
    \draw (2) to[short] ++(4,0) coordinate (4);

    \draw (3) to[generic=$\frac{y^G_{ij}}{2}$] (4);  

    \draw (3) to[short,-o] ++(1.5,0) coordinate (a);
    \draw (4) to[short,-o] ++(1.5,0) coordinate (b);

    \path (a) -- node[] (Vt) {$v_t$} (b);
    \draw[->,>=stealth',shorten >=5pt]
    (Vt) edge[] (a);
    \draw[shorten >=4pt] (Vt) -- (b);

    \draw[<-,>=stealth'] (a)+(-0.7, 0.4) -- node[above] {$i_t$} ++(0, 0.4);

\end{circuitikz}
    \caption{Transmission branch model (line or transformer). $N=\tau e^{\iu\theta_\textrm{shift}}$, where $\tau$ is a tap ratio and $\theta_\textrm{shift}$ is a phase shift of the transformer. If the branch is a line, then $N=1$. \textit{From} side of the branch is on the left, \textit{to} side is on the right.}
    \label{fig:branch}
\end{figure}
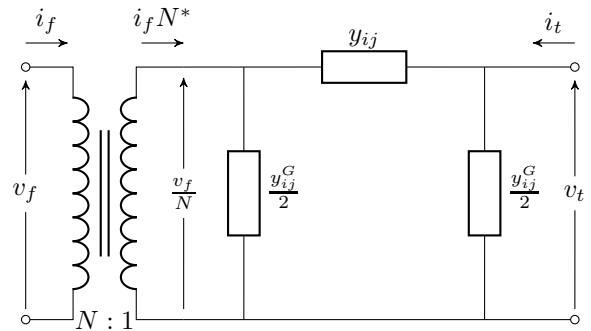

\subsection{Branch and Bus Admittance matrices}

Let $1,\dots, n$ be buses.
The bus admittance $n\times n$ matrix $Y$ is defined in the following way:

\begin{equation*}
    Y_{ij}=
    \begin{cases}
    y^G_i+\sum_{\substack{k=1,\dots,n \\ k\neq i; k=(i,k)_t}}\left(\frac{y^G_{ik}}{2}+y_{ik}\right)\frac{1}{\tau^2}+\\
    +\sum_{\substack{k=1,\dots,n \\ k\neq i; k=(i,k)_f}}\left(\frac{y^G_{ik}}{2}+y_{ik}\right), \quad \text{if}\, i=j \\
    -y_{ij}\frac{1}{\tau e^{-\iu\theta_\textrm{shift}}},\quad \text{if}\, i\neq j, i = l_f, j=l_t \\
    -y_{ij}\frac{1}{\tau e^{\iu\theta_\textrm{shift}}},\quad \text{if}\, i\neq j, i = l_t, j=l_f 
    \end{cases}
\end{equation*}
where $y^G_i$ is admittance between bus $i$ and the ground and $y_{ij}$ is admittance between bus $i$ and bus $j$ (in case of parallel lines $y_{ij}$ is the sum of admittances of the parallel lines), and $y^G_{ij}$ is admittance between the line connecting bus $i$ and $j$ and the ground (in case of parallel lines, $y^G_{ij}$ is the sum of admittances between each parallel line and the ground), $\tau$ is a tap ratio of a transformer ($1$ if the branch is a line), $\theta_\textrm{shift}$ is a phase shift of a phase shifter ($0$ for a line).

Let $1,\dots,m$ be branches.
The "from" branch admittance $m\times n$ matrix $Y_f$ is defined:
\begin{equation}
    Y_{f,li}=
    \begin{cases}
        0, & \parbox[t]{4cm}{if branch $l$ is not connected to bus $i$} \\
        \left(y_{ij}+\frac{y^G_{ij}}{2}\right)\frac{1}{\tau^2}, & \parbox[t]{4cm}{if $i$ = $l_f$,  $j=l_t$}\\
        -y_{ij}\frac{1}{\tau e^{-\iu\theta_\textrm{shift}}}, & \parbox[t]{4cm}{if $i$ = $l_t$,  $j=l_f$}
    \end{cases}
\end{equation}

The "to" branch admittance $m\times n$ matrix $Y_t$ is defined:
\begin{equation}
    Y_{t,li}=
    \begin{cases}
        0, & \parbox[t]{4cm}{if branch $l$ is not connected to bus $i$} \\
        -y_{ij}\frac{1}{\tau e^{\iu\theta_\textrm{shift}}}, & \parbox[t]{4cm}{if $i$ = $l_f$,  $j=l_t$}\\
        y_{ij}+\frac{y^G_{ij}}{2}, & \parbox[t]{4cm}{if $i$ = $l_t$,  $j=l_f$}
    \end{cases}
\end{equation}

\subsection{Power Equations}\label{sec_rect_current_injection}
The power flows in the grid are governed by Kirchhoff's and Ohm's laws which can be expressed by the following matrix equations \cite{zhang2017conic}:
\begin{equation}
i=Yv,\quad i_{f}=Y_{f}v,\quad i_{t}=Y_{t}v,
\end{equation}
where $i=(i_1,\dots,i_n)^T$, $i_f=(i_{f,1},\dots,i_{f,m})^T$ and $i_t=(i_{t,1},\dots,i_{t,m})^T$.

The injected total power can be expressed by the following equation:

\begin{equation}
    s = p+q\iu = \diag\left(vv^{*}Y^{*}\right) = v\odot  (v^*Y^*)^T.
\end{equation}

where $\odot$ is the entry-wise (Hadamard) product, $p=(p_1,\dots,p_n)^T$ and $q=(q_1,\dots,q_n)^T$ are vectors of active and reactive power injections to the buses, respectively. Note that this equation is not linear.



\section{Steady State Estimation}
\subsection{State Estimation and Measurements}
In order to estimate $v$ we take a vector of measurements $z\in\mathbb{C}^L$ where $L=L^\scada + 2L^\pmu > 2n$, $L^\scada$ is the number of SCADA measurements and $L^\pmu$ is the number of PMU measurements, while simultaneously demanding a certain degree of uniformity of the measurements. In fact, one should keep in mind that the aforementioned method is just a heuristic and that the precise determination of the degrees of freedom is actually and NP-hard problem (see Section \ref{sec_robustness}). The measurements consist of voltage magnitudes at buses, injections to buses, active and reactive powers in network elements. SCADA measurements are real numbers. PMU measurements are complex. The measurement vector is related to the state vector by the following equation:
\begin{equation}
    z=h(v)+\eta,
\label{eq:statevsmeasurement}
\end{equation}
where $h:\mathbb{C}^n\mapsto\mathbb{C}^L$ is a non-linear map and $\eta\in\mathbb{C}^L$ is a 
measurement noise vector.

The function $h(x)$ is derived from the power network admittance matrix and Ohm’s laws. See equations~(\ref{eq:measurements}--\ref{eq:measurements_last}).






\subsection{Measurement Functions formulation}\label{sec_rectangular_voltage_formulation}



Let us define

\begin{align}\label{eq_measurements_bus}
E_{k}&=e_{k}e_{k}^{T}, \\
Y_{k,p}&=\frac{1}{2}\left(Y^{*}E_{k}+E_{k}Y\right), \\
Y_{k,q}&=\frac{\iu}{2}\left(E_{k}Y-Y^{*}E_{k}\right) \\
Y_{l,p_{f}}&=\frac{1}{2}\left(Y_{f}^{*}d_{l}e_{l_{f}}^{T}+e_{l_{f}}d_{l}^{T}Y_{f}\right), \\ Y_{l,p_{t}}&=\frac{1}{2}\left(Y_{t}^{*}d_{l}e_{l_{t}}^{T}+e_{l_{t}}d_{l}^{T}Y_{t}\right),\\
Y_{l,q_{f}}&=\frac{\iu}{2}\left(e_{l_{f}}d_{l}^{T}Y_{f}-Y_{f}^{*}d_{l}e_{l_{f}}^{T}\right), \\
Y_{l,q_{t}}&=\frac{\iu}{2}\left(e_{l_{t}}d_{l}^{T}Y_{t}-Y_{t}^{*}d_{l}e_{l_{t}}^{T}\right),
\label{eq_measurements_line_final}
\end{align}
where $e_{k}$ and $d_{l}$ are the $k$-th and $l$-th canonical basis vectors of $\mathbb{C}^{n}$ and $\mathbb{C}^{m}$, respectively.
Using the definitions above, one can compute voltages, active power and reactive power for the buses and lines using the following relationships
\begin{align}\label{eq:measurements}
\left|v_{k}\right|^{2}&=\Tr\left(E_{k}vv^{*}\right),\\ p_{k}&=\Tr\left(Y_{k,p}vv^{*}\right),  \\ q_{k}&=\Tr\left(Y_{k,q}vv^{*}\right), \\
p_{f,l}&=\Tr\left(Y_{l,p_{f}}vv^{*}\right), \\ q_{f,l}&=\Tr\left(Y_{l,q_{f}}vv^{*}\right), \\
p_{t,l}&=\Tr\left(Y_{l,p_{t}}vv^{*}\right), \\ q_{t,l}&=\Tr\left(Y_{l,q_{t}}vv^{*}\right). \label{eq:measurements_last}
\end{align}
Using the aforementioned relationships, which are quadratic in the complex voltage, allows us to formulate the problem of state estimation as a polynomial optimization problem.

\subsection{Polynomial Optimization Formulation}

Let us consider state estimation problem for a mixed-measurement setting involving SCADA and PMU measurements, in the so-called Huber  model \cite{huber2004robust} of robust statistics. Using this formalism, one assumes that some of the measurements are corrupted and should not be considered. This could also be seen as a joint problem of feature selection and power system state estimation.
This joint problem can be formulated either as a polynomial optimization problem (POP), or 
mixed-integer quadratically constrained quadratic program (MIQCQP).

In addition to the notation introduced in Sections \ref{sec_rectangular_voltage_formulation} and \ref{sec_rect_current_injection}, 
let $C$ with various sub-indices be the constant weights associated with SCADA and PMU measurements of different quantities in the objective function. Let $C_{v}^{\pmu}, C_{p,q}^{\scada}$ and $C_{v}^{\scada}$ be constants that balance the different physical interpretations of the quantities acquired by PMU and SCADA measurements, respectively. Making use of the relationships (\ref{eq_measurements_bus}--\ref{eq_measurements_line_final}), the problem may be formulated as

\begin{multline}
\label{eq_objective_of_poly_opt}
\underset{v,\beta,\gamma}{\min}\,\, C_{v}^{\pmu}\sum_{i\in I_{\text{\pmu}}}\beta^V_{i}\left|v_{i}-v_{i}^{\pmu}\right|^{2}+ \\
+C_{i}^{\pmu}\sum_{i\in I_{\text{\pmu}}}\beta^I_{i}\left|i_{i}-i_{i}^{\pmu}\right|^{2}+\\
+C_{i,tf}^{\pmu}\sum_{i\in I_{\pmu}}\beta^{I,f}_{i}\left|i_{f,i}-i_{f,i}^{\pmu}\right|^{2}+\beta^{I,t}_{i}\left|i_{t,i}-i_{t,i}^{\pmu}\right|^{2}+\\
+C_{p,q}^{\scada}\sum_{i\in I_{\scada}}\gamma_{i,1}\left(q_{i}-q_{i}^{\scada}\right)^{2}+\gamma_{i,2}\left(p_{i}-p_{i}^{\scada}\right)^{2}\\
+C_{v}^{\scada}\sum_{i\in I_{\scada}}\gamma_{i,3}\left|\left|v_{i}\right|^{2}-\left(\left|v_{i}\right|^{\scada}\right)^{2}\right| \\
+C_{p,q}^{\scada}\sum_{i\in I_{\scada}}\gamma_{i,4}\left(q_{i,f}-q_{i,f}^{\scada}\right)^{2}+\gamma_{i,5}\left(p_{i,f}-p_{i,f}^{\scada}\right)^{2}\\
+C_{p,q}^{\scada}\sum_{i\in I_{\scada}}\gamma_{i,6}\left(q_{i,t}-q_{k,t}^{\scada}\right)^{2}+\gamma_{i,7}\left(p_{i,t}-p_{i,t}^{\scada}\right)^{2}
\end{multline}
such that
\begin{align}\label{eq_pop_formulation_1} 
i&=Yv,\quad i_{f}=Y_{f}v,\quad i_{t}=Y_{t}v, \\ 
\left|v_{k}\right|^{2}&=\Tr\left(E_{k}vv^{*}\right), \quad p_{k}=\Tr\left(Y_{k,p}vv^{*}\right),\\ q_{k}&=\Tr\left(Y_{k,q}vv^{*}\right), \quad
p_{f,l}=\Tr\left(Y_{l,p_{f}}vv^{*}\right), \\
q_{f,l}&=\Tr\left(Y_{l,q_{f}}vv^{*}\right), \quad
p_{t,l}=\Tr\left(Y_{l,p_{t}}vv^{*}\right),  \\
q_{t,l}&=\Tr\left(Y_{l,q_{t}}vv^{*}\right) \\ 
d &= 
2\|\beta\|_{1}+\sum_{j}\|\gamma_{\cdot,j}\|_{1} \label{eq_robust_start}\\
(\beta^V_{i})^{2}&=\beta^V_{i},(\beta^I_{i})^{2}=\beta^I_{i},\label{eq_pop_formulation_pre_end}(\beta^{I,f}_{i})^{2}=\beta^{I,f}_{i},\\(\beta^{I,t}_{i})^{2}&=\beta^{I,t}_{i},
\gamma_{i,j}^{2}=\gamma_{i,j},
\label{eq_pop_formulation_0_end}
\end{align}

where $i, j$ in the last equality range through the appropriate indices, $d$ denotes the number of measurements considered, $i, i_{f}, i_{t}$ denote the injected currents at nodes and line from and to currents, respectively, and $\beta_{i}$ and $\gamma_{i,j}$ are measurement selection variables, which are ensured to be discrete by \eqref{eq_pop_formulation_pre_end} and \eqref{eq_pop_formulation_0_end}.
The norm is defined as follows:
\begin{equation}
\left\Vert v\right\Vert _{1}=\sum_{i=1}^{n}\left|v_{i}\right|.
\end{equation}

Due to the independence of the two sets of decision variables ($v$ versus $\beta$ and $\gamma$) one can view the problem above as a pessimistic independent bi-level problem \cite{wie2013, Chuong23}. In this setting, the inner problem becomes estimation of the state $v$ with a fixed set of $\beta$ and $\gamma$, the other problem is then a minimization with respect to the variables that select the measurements $\beta$ and $\gamma$. The optimal solution of this independent bi-level problem is trivially the solution of a single level optimization problem in which the state is fixed to the optimal state found by the bi-level. Assuming the additivity of the noise model (which is equivalent to assuming that the additive errors in the models presented in Section \ref{sec_sensors_and_meas} are all equal to 1) the solution of such a problem has been shown to exhibit statistical robustness similar to Lasso relaxations and is algorithmically tractable \cite{bertsimas2015best, huber2004robust}.

Through the decision variables $\beta_{i}$ and $\gamma_{i,j}$, the best subset selection method is incorporated in a way which mirrors the best state of the art results with proven convergence guarantees \cite{bertsimas2015best}. Another alternative way of incorporating best subset selection, would be the use of Lasso like relaxations, which would result in adding a term of the form 
\begin{equation}
r\left(2\|\beta\|_{1}+\sum_{j}\|\gamma_{\cdot,j}\|_{1}\right),    
\end{equation}
where $r > 0$ is the regularization and removing \eqref{eq_robust_start}, \eqref{eq_pop_formulation_pre_end} and \eqref{eq_pop_formulation_0_end}. from the constraints. This kind of formulation has proven guarantees for simpler (quadratic programming) problems in which mild assumptions on the noise and eigenvalue assumptions on the model matrix lead to desirable statistical properties \cite{green04, dono06, zhao06}.

In the polynomial optimization problem (\ref{eq_pop_formulation_1}--\ref{eq_pop_formulation_0_end}) one could specify a new variable 
\begin{equation}
w_{k}=\lvert v_{k}\rvert^{2},   
\end{equation}
which makes the cost function quadratic (instead of quartic) in the decision variables. Alternatively, one can define the variable
\begin{equation}\label{eq_additional_const_w_k}
\widetilde{w}_{k}^{2}=\left|v_{k}\right|^{2}   
\end{equation}
and modify the cost function by
\begin{align}
\begin{split}
\gamma_{i,3} \left(\left|v_{i}\right|^{2} - \left(\left|v_{i}\right|^{\scada}\right)^{2}\right)^{2} \\
\overset{\text{replaced by}}{\longrightarrow} \gamma_{i,3} \left(w_{k} - \left(\left|v_{i}\right|^{\scada}\right)^{2}\right)^{2}
\end{split}    
\end{align}
which leads to a least square interpretation of the cost function at the cost of an additional constraint \eqref{eq_additional_const_w_k}.

In order to treat quantities with the same physical significance consistently, we can also make the following exchange 
\begin{align}
    \begin{split}
\beta_{i}\left|v_{i}-v_{i}^{\pmu}\right|^{2}\overset{\text{replaced by}}{\longrightarrow}\beta_{i}\left(w_{k}-\left|v_{i}^{\pmu}\right|^{2}\right)^{2} \\+\widetilde{\beta}_{i}\left|\arg\left(v_{i}\right)-\arg\left(v_{i}^{\pmu}\right)\right|^{2},         
    \end{split}
\end{align}
where an algebraic replacement that is valid for a sufficient range of values would be used to replace $\arg$ in the formulation and implementation.

\subsection{Robustness of the estimate}\label{sec_robustness}
Due to the structure of the problem (\ref{eq_objective_of_poly_opt}--\ref{eq_pop_formulation_0_end}), the application of robust statistics captured by (\ref{eq_robust_start}--\ref{eq_pop_formulation_0_end}) and the variables $\beta_{k}, \gamma_{i,j}$ is not straightforward and requires some additional consideration. To this end, we describe an illustrative example which explains the fundamental difficulties of determining the degrees of freedom of the system, and follow it up by proposing a refinement of the robust part of (\ref{eq_objective_of_poly_opt}--\ref{eq_pop_formulation_0_end}). 

First, it is important to note that there exist two different views on degrees of freedom, which can be applied to measured systems. The first one of these is a fundamental concept that would recognize the power system in question as a system with $2n-1$ degrees of freedom, regardless of measurement outcomes. This approach however does not help us determine the number of measurements that need to be chosen in order to determine the state uniquely. To this end, a different concept of degrees of freedom is often introduced in which noiseless measurements are considered as physical constraints on the system that constrain the problem and change the degrees of freedom \cite{Marecek2014PowerFA, Baillieul82}. The term degrees of freedom will be henceforth used in this sense.

The correct choice of $d$ in (\ref{eq_robust_start}--\ref{eq_pop_formulation_0_end}) is non-trivial and the algebraic degrees of freedom of such systems are not known exactly. In theory, this problem has been studied and it has been shown that adding measurement data constraining the system changes the degrees of freedom in a non-trivial way \cite{Marecek2014PowerFA, Baillieul82} and the precise determination of the degrees of freedom is an NP-hard problem \cite{Marecek2014PowerFA} and thus any precise method that determines a suitable number of measurements would be costly. Section \ref{sec_robust_stat} details how these issues may be partially addressed numerically.

\section{A Numerical Study}\label{sec_num_study}

\begin{table*}
\centering
\caption{Estimating the states of IEEE test cases without noise
and comparing with the results of a quasi-Newton method (QN).
}
\label{tab_solutions_on_ieee_instances_scaling}
\begin{tabular}{|p{1.0cm}|r|r|r|r|r|r|r|r|r|r|r|r|r}
 \toprule
  & & & & \multicolumn{4}{|c|}{QN} & \multicolumn{4}{|c|}{MIQCQP}\\
 Case & buses & branches & $N$ & $cost(v_\text{est})$ & $d_2(v_\text{est})$ & $d_\infty(v_\text{est})$ & Runtime [s] & $cost(v_\text{est})$ & $d_2(v_\text{est})$ & $d_\infty(v_\text{est})$  & Runtime [s] \\
 \midrule
case14 & 14 & 20 & 82 & 0.00 & 0.00 & 0.00 & 13.72 & 0.00 & 0.00 & 0.00 & 3124.26 \\
case30 & 30 & 41 & 172 & 0.00 & 0.00 & 0.00 & 80.16 & 15.65 & 89.05 & 2.12 & 43227.64 \\
case39 & 39 & 46 & 209 & 0.01 & 116.21 & 1.85 & 82.73 & 103.80 & 113.28 & 2.08 & 43220.69 \\
case57 & 57 & 80 & 331 & 0.00 & 0.00 & 0.00 & 552.07 & 4.43 & 30.48 & 1.28 & 43207.77 \\
case89 & 89 & 210 & 687 & 96.06 & 60.03 & 1.30 & 10301.04 & 482.09 & 33.12 & 2.52 & 43215.95 \\
case118 & 118 & 186 & 726 & 0.00 & 194.83 & 1.39 & 5508.65 & 446.59 & 260.39 & 2.19 & 43218.97 \\
case145 & 145 & 453 & 1341 & 1190.69 & 352.65 & 2.71 & 43211.64 & 7163.93 & 337.97 & 2.27 & 43210.00 \\
case300 & 300 & 411 & 1722 & 691.78 & 459.16 & 2.91 & 43210.62 & 457.94 & 954.37 & 2.22 & 43216.56 \\

\bottomrule
\end{tabular} 
\end{table*}

In the following, we discuss multiple aspects of solving the state estimation problem, which lead to better accuracy and robustness of the result. 

First, the non-convex nature of the state space of the state estimation problem is highlighted as the quasi-Newton method is shown to not arrive at the global minimum for a simple example on two buses. Second, we show that the global solution can be reliably found using the formulation (\ref{eq_objective_of_poly_opt}--\ref{eq_pop_formulation_0_end}) using Gurobi. Using the same solver, we discuss scaling for this rudimentary implementation on IEEE test cases~\cite{matpower}. Lastly, we give a couple of simple examples in which the robust selection of measurements is applied.

\begin{table*}
\centering
\caption{Estimating the states of IEEE test cases with Gaussian noise
and comparing with the results of a quasi-Newton method (QN).
}
\label{tab_solutions_on_ieee_instances_scaling_with_noise}
\begin{tabular}{|p{1.0cm}|r|r|r|r|r|r|r|r|r|r|r|r|r}
 \toprule
  & & & & \multicolumn{4}{|c|}{QN} & \multicolumn{4}{|c|}{MIQCQP}\\
\multicolumn{1}{|c|}{Case} & \multicolumn{1}{|c|}{buses} & \multicolumn{1}{|c|}{branches} & \multicolumn{1}{|c|}{$N$} & \multicolumn{1}{|c|}{cost$(v_\text{est})$} & \multicolumn{1}{|c|}{$d_2(v_\text{est})$} & \multicolumn{1}{|c|}{$d_\infty(v_\text{est})$} & \multicolumn{1}{|c|}{Runtime [s]} & \multicolumn{1}{|c|}{cost$(v_\text{est})$} & \multicolumn{1}{|c|}{$d_2(v_\text{est})$} & \multicolumn{1}{|c|}{$d_\infty(v_\text{est})$}  & \multicolumn{1}{|c|}{Runtime [s]} \\
 \midrule

case14 & 14 & 20 & 82 & 4.15 & 0.84 & 0.72 & 15.03 & 4.15 & 0.84 & 0.72 & 43261.63 \\
case30 & 30 & 41 & 172 & 2.21 & 3.69 & 0.58 & 74.09 & 9.51 & 63.17 & 2.13 & 43221.81 \\
case39 & 39 & 46 & 209 & 162.72 & 136.98 & 2.18 & 228.92 & 162.71 & 153.87 & 2.17 & 43238.86 \\
case57 & 57 & 80 & 331 & 12.46 & 5.44 & 0.88 & 339.74 & 23.44 & 146.12 & 2.11 & 43222.82 \\
case89 & 89 & 210 & 687 & 336.31 & 43.29 & 1.99 & 9403.88 & 727.88 & 30.67 & 2.53 & 43213.34 \\
case118 & 118 & 186 & 726 & 46.97 & 171.22 & 1.60 & 2848.69 & 46.97 & 171.09 & 1.60 & 43215.05 \\
case145 & 145 & 453 & 1341 & 108189.87 & 375.29 & 2.93 & 43211.95 & 108821.06 & 490.83 & 2.75 & 43211.36 \\
case300 & 300 & 411 & 1722 & 1262.66 & 459.00 & 2.91 & 43214.65 & 1508.82 & 502.59 & 3.23 & 43208.93 \\

\bottomrule
\end{tabular} 
\end{table*}

\subsection{Recovering the Exact Global Minimum of a Two Bus Systems}

We present an example~\cite{zhang2018spurious} of a simple two-bus power system, in which the non-linear least squares problem has multiple local minima.
We consider a system with two buses and a single line that connects them. Line admittance is $y_{12}=y_{21}=\frac{1}{0.01+0.1\iu}$ per unit.
We set bus 1 as the reference bus and load bus 2 with power load $2+1\iu$.
The true state of the system is $v=(v_1, v_2)=[1, 0.806-0.19\iu]$.
Thus, $v$ minimizes the least squares problem with zero-error measurements and attains the optimum of $0$.

We consider the following measurements
$$h_1(v)=v^*_1v_1=\rvert v_1\lvert^2$$
$$h_2(v)=\Re[y^*_{21}(v_2-v_1)^*v_2]=p_2$$
$$h_3(v)=\Im[y^*_{21}(v_2-v_1)^*v_2]=q_2$$
$$h_4(v)=\Re[y^*_{12}(v_1-v_2)^*v_1]=p_1$$

and assume no measurement selection (all of the $\gamma$ and $\beta$ variables are set to 1) and a cost function only containing the relevant measurements with coefficients set to unity.

Using a symbolic solver, we can see the following critical points:
\begin{equation}
    \begin{pmatrix}v_1\\v_2\\\theta_2\end{pmatrix}\in\left\{\begin{pmatrix}1\\0.829\\-13.2^{\circ}\end{pmatrix},
    \begin{pmatrix}0.870\\0.345\\-35.7^{\circ}\end{pmatrix},
    \begin{pmatrix}0.846\\0.401\\-32.0^{\circ}\end{pmatrix},
    \begin{pmatrix}0\\0\\0\end{pmatrix} \right\}
\end{equation}

The above points have the following the least squares cost function values:

$$\{0, 0.11183, 0.11299, 10.297 \}.$$

The first value is the global minimum corresponding to the true state. The second point is a local minimum. The third is a saddle point. The last is a local maximum.
We can see that there is a large difference in the estimations of $v_2$ and $\theta_2$ corresponding to the two local minima.

The reason for this multiplicity of minima is nonconvexity, which has been well documented \cite{lesieutre2005convexity,lesieutre2011examining,bukhsh2013local,molzahn2016convex,molzahn2016visualizing}.
We refer to \cite{molzahn2019survey} for an exhaustive survey.

This multiplicity leads to any local method being dependent on the initialization, which is highly undesirable. However, using a global solution method based on (\ref{eq_objective_of_poly_opt}--\ref{eq_pop_formulation_0_end}) allows us to recover the global minimum of the problem and recover the state of the system up to numerical precision.




\subsection{Estimating the Steady State on IEEE Test Cases}
In the following, we present results of our novel mixed-integer quadratically constrained quadratic program (MIQCQP) formulation on variants of commonly used IEEE test cases~\cite{matpower} and compare the results with those of a quasi-Newton method (L-BFGS-B, \cite{zhu1997algorithm}) implemented in SciPy \cite{2020SciPy-NMeth}\footnote{https://docs.scipy.org/doc/scipy/reference/optimize.minimize-lbfgsb.html} with default setting. 
In particular, we present results for two variants of each instance, differing in the amounts of noise applied to the measurements.
Each of the experiments was run on a computing cluster with AMD EPYC 7543 cpus. We set a limit of 64~GB RAM and four cores per task.

We note that no warm-start or fine-tuning of the constants in the formulation or the solver to facilitate the speedup for the specific instances has been performed for the MIQCQP formulation, i.e., only plain-vanilla Gurobi with quadratic programming (QP) relaxations has been used. This gives hope of dramatic speedups by fine-tuning the solver, utilizing SDP relaxations, and applying mixed-integer decompositions. Some of these options are discussed in Sections \ref{sec_alternative_algs} and \ref{sec_time_var_SDP}.
Within the quasi-Newton method, we start with an initial guess equal to $1+\iu0$ for every bus. The algorithm stops when either i) $\left|f^{k}-f^{k+1}\right|/\max\{|f^k|,|f^{k+1}|,1\}\leq 10^{-6}$ or ii) $\textrm{max}\{|g_i|;i = 1, \dots, 2n-1\} \leq 10^{-6}$ where $f_k$ is the solution in $k$-th iteration and $g_i$ is $i$-th coordinate of the gradient.

Tables \ref{tab_solutions_on_ieee_instances_scaling} and \ref{tab_solutions_on_ieee_instances_scaling_with_noise} present the results of the scaling experiments for IEEE test cases with and without noise, respectively.
The noise was applied in the following way. For each measurement, we computed the true measurement value $m$ based on the true state and add a noise sampled from Gaussian distribution with mean $0$ and variance equal to $0.1m$.
The quality of the results is evaluated based on the comparison with the true state in terms of voltages (which is known for the test cases) using the following two distances:
\begin{equation}
    d_2(v_\textrm{est})=\Vert v - v_\textrm{est} \Vert_2^2,\quad d_\infty(v_\textrm{est})=\Vert v - v_\textrm{est} \Vert_\infty,
\end{equation}
where $v$ is the true state.
The above norms are defined as follows
\begin{equation}
\left\Vert v\right\Vert _{2}^{2}=\sum_{i=1}^{n}\left|v_{i}\right|^{2},\quad\left\Vert v\right\Vert _{\infty}=\underset{i\in\left\{ 1,2,\ldots,n\right\} }{\max}\left|v_{i}\right|.
\end{equation}
The $cost(v_\text{est})$ equals to the expression~\eqref{eq_objective_of_poly_opt} evaluated at the estimated state $v_\text{est}$. Note that, in the case if all coefficients $C^*_*$, $\beta^*_*$, $\gamma_*$ equal 1, then $cost(v_\text{est})$ is equal to root mean square error (RMSE) squared with respect to the estimated values of measurements, which are derived from the estimated state $v_\text{est}$.

The experiments demonstrate that the quality of solutions obtained using the quasi-Newton method becomes progressively worse as the size of the grid increases. 
Note that, if the computation time was not limited, the MIQCQP solver would continue and eventually find the true global minimum. 

We can see that MIQCQP method performs better, in terms of $d_2(v_\text{est})$, on the problems with a larger number of buses. MIQCQP performs always better in terms of $cost(v_\text{est})$. On some problems substantially. 
A drawback of MIQCQP method is a longer running time. Which we believe can be decreased substantially by fine tuning MIQCQP method. 


Since our method guarantees global convergence, these figures can be further improved given additional time, tweaking the formulation, improving the cutting plane algorithm or improving hardware, while in the case of quasi-Newton method, no improvement can be made by committing further resources, such as improved hardware or higher time limit.

\subsection{Robust Statistics}\label{sec_robust_stat}
In this section, we demonstrate the utility of robust statistics for the elimination of outlier data. The data is generated subject to two types of  Gaussian noise. First, we choose fault probability $p_f$ to be either $0.1$ or $0.01$. Then we uniformly with probability $p_f$ select faulty measurements. Let $m$ be the measurement value without noise. Then we add Gaussian noise with mean $0$ and variance either $0.1m$ or $100m$ depending if the measurement was selected as non-faulty or faulty, respectively.

The number of required measurement ($d$ in Equation~\ref{eq_robust_start}) is chosen either $0.9$ or $0.99$ times the total number of measurements, if $p_f$ equals $0.1$ or $0.01$, respectively.

Under these circumstances, we compare the results of our method with the quasi-Newton method of SciPy and the non-robust variant of our formulation.

The results are documented in Table \ref{tab_robust_results}. We can see that the robust formulation performs substantially better, in terms of $d_2(v_\text{est})$, than the quasi-Newton method or the non-robust variant.

\begin{table*}
\centering
\caption{A demonstration of the effects of the use of robust statistics. The mean and standard deviation (std) over 10 runs,
where noise is specified by the probability of applying a grave error, whose amplitude is 1000x greater then the usual noise.
} \label{tab_robust_results}
\begin{tabular}{|p{0.8cm}|r|r|r|r|r|r|r|r|r|r|}
 \toprule
  & & \multicolumn{3}{|c|}{QN} & \multicolumn{3}{|c|}{Standard MIQCQP}& \multicolumn{3}{|c|}{Robust MIQCQP}
  \\
  
 Case & Fault & \multicolumn{2}{|c|}{$d_2(v_\text{est})$} & Runtime &\multicolumn{2}{|c|}{$d_2(v_\text{est})$} & Runtime &\multicolumn{2}{|c|}{$d_2(v_\text{est})$}& Runtime
 \\
&  probability & mean & std & mean [s] & mean & std & mean [s] & mean &  std  & mean [s]
 \\
 \midrule

case14 & 0.01 & 227.78 & 445.85 & 7.79 & 131.99 & 175.00 & 43248.27 & 1.97 & 1.52 & 43235.31 \\
case30 & 0.01 & 318.75 & 511.54 & 45.78 & 276.13 & 487.05 & 43228.04 & 12.07 & 9.08 & 43207.63 \\
case39 & 0.01 & 2396.88 & 4532.82 & 80.26 & 1821.78 & 3947.85 & 43229.35 & 111.34 & 46.48 & 43220.79 \\
case57 & 0.01 & 491.24 & 551.61 & 232.71 & 539.55 & 534.68 & 43226.60 & 82.02 & 22.52 & 43207.50 \\



case14 & 0.10 & 566.67 & 483.49 & 29.49 & 567.66 & 465.45 & 43232.17 & 43.99 & 54.78 & 43205.90 \\
case30 & 0.10 & 1109.04 & 952.78 & 46.40 & 1106.35 & 885.00 & 43227.66 & 67.37 & 19.03 & 43203.37 \\
case39 & 0.10 & 6473.25 & 5301.31 & 86.62 & 6199.83 & 5012.91 & 43236.14 & 777.72 & 3024.57 & 43203.62 \\
case57 & 0.10 & 1681.75 & 649.46 & 179.01 & 1788.87 & 650.08 & 43218.39 & 89.08 & 46.54 & 43203.24 \\


\bottomrule
\end{tabular} 
\end{table*}


\section{Tracking Trajectories of Time-Varying Problems}\label{sec_alternative_algs}

The MIQCQP formulation of Section \ref{sec_num_study} can be passed to off-the-shelf solvers (such as Gurobi of the previous section) to solve the problem globally with guarantees. Although this is a very robust method, other methods may be more desirable in real-time optimization practice.
In the following, one of these methods is described. 

Polynomial optimization problems (POP) have been studied for multiple decades \cite{anjos2011handbook}, and multiple solution methods exist, often involving semidefinite programming (SDP) relaxations. 
Recently, solution tracking in SDP relaxations has been developed \cite{liu2017,Bellon_2024,bellon2024time} as a method to provide near real-time optimization with guarantees on the distance from the true trajectory of optima by leveraging knowledge about previous solutions.
 
\subsection{The Derivation of SDP Relaxation of the State Estimation in Polynomial Formulation}
SDP relaxations of (\ref{eq_objective_of_poly_opt}-\ref{eq_pop_formulation_0_end}) based on the method of moments \cite{Lessere04} will be briefly sketched. To write out the general form of the relaxation, a couple of definitions need to be stated.
\begin{definition}[Moment matrix of order $d$]\label{eq_moment_matrix}
Let $d,n \in \mathbb{N}$ and consider a finite sequence $y=\left(y_\alpha\vert \alpha\in\mathbb{N}^n, \left\vert\alpha\right\vert\leq2d\right)$, where $\alpha=(\alpha_1,\dots,\alpha_n)$ is a multi-index and $\left\vert\alpha\right\vert=\alpha_1+\dots+\alpha_n$, which defines the following linear functional on the space of polynomials up to degree $2d$:
\begin{equation}
L_{y}\left(p\right)=\sum_{\lvert\alpha\rvert\leq 2d}p_\alpha y_\alpha 
\end{equation}
where $p=\sum_{\lvert\alpha\rvert\leq 2d}p_\alpha x^\alpha$, where $x^\alpha=x_1^{\alpha_1}x_2^{\alpha_2}\cdots x_n^{\alpha_n}$. Then the moment matrix $M_{d}\left(y\right)$ associated with $y$ is defined as:
\begin{equation}
M_{d}\left(y\right)_{\alpha\beta}=L_{y}\left(x^\alpha x^\beta\right)=y_{\alpha+\beta},
\end{equation}
where $\alpha,\beta\in\mathbb{N}^n$, $\lvert\alpha\rvert\leq d$, $\lvert\beta\rvert\leq d$.
\end{definition}
It is clear to see that the moment matrix is always symmetric.
\begin{definition}[Localizing matrix of order $d$ with respect to polynomial $a(x)$]
Suppose $r\in\mathbb{N}_0$, $d, n\in \mathbb{N}$ are such that $r < d$, $a(x)=\sum_{\lvert\alpha\rvert\leq r}a_{\alpha}x^{\alpha}$ and let $y$ be as in Definition~\ref{eq_moment_matrix}.

\begin{align}
\begin{split}
M_{d}\left(a(x),y\right)_{\alpha\beta}=L_{y}\left(x^\alpha x^\beta a(x)\right)=\sum_{\vert\gamma\rvert\leq 2d - \lvert\alpha\rvert - \lvert\beta\rvert}a_{\gamma}y_{\gamma+\alpha+\beta},
\end{split}
\end{align}
where $\alpha,\beta, \gamma \in\mathbb{N}^n$, $\lvert\alpha\rvert\leq d$, $\lvert\beta\rvert\leq d$.
\end{definition}
Using the aforementioned concepts and indexing all the polynomial inequalities in (\ref{eq_objective_of_poly_opt}-\ref{eq_pop_formulation_0_end}) using the index set $I$ resulting in $g_{i} \geq 0$, one can write the moment SDP relaxation as
\begin{align}
    \begin{split}\label{eq_moment_problem}
\rho_{d}^{\text{mom}} &=\underset{y\in\mathbb{R}^{s\left(2d\right)}}{\inf}\sum_{\vert\alpha\rvert\leq 2d}f_{\alpha}y_{\alpha}, \\
\text{s.t. } &y_{0\cdots0}=1,M_{d}\left(y\right)\succeq0, \\
& M_{d-d_{i}}\left(g_{i}, y\right)\succeq0,\quad i\in I,
    \end{split}
\end{align}
where $y=\left(y_\alpha\vert \alpha\in\mathbb{N}^n, \left\vert\alpha\right\vert\leq2d\right)$, $d_{i}=\left\lceil\frac{1}{2} \deg\left[g_{i}\right]\right\rceil  $, $s=s\left(d\right)$ is the number of monomials up to degree $d$ and $f_{\alpha}$ are the coefficients of the polynomial cost function $f$. Note that the constraints in \eqref{eq_moment_problem} are a rewrite of the original constraints and the cost function arises by reformulating the original cost function in terms of moments of measures.


\subsection{The Derivation of the SDP Relaxation for Relevant Forms of Constraints}\label{app_derivation_of_SDP}
Suppose that $n \in \mathbb{N}$, $v \in \mathbb{C}^{n}$, using this setting we resolve the right-hand side of $p_{k}=\Tr\left(Y_{k,p}vv^{*}\right)$. Unravelling all the relevant definitions yields
\begin{align}
\begin{split}
\Tr\left(\frac{1}{2}(Y^{*}E_{k}vv^{*}+E_{k}Yvv^{*})\right)= \\
= \frac{1}{2}(\Tr(Y^{*}E_{k}vv^{*})+\Tr(E_{k}Yvv^{*}))= \\
=\ 
\frac{1}{2}\left[\sum^n_{i=1}\overline{Y_{ki}}\overline{v_{i}}v_k + \sum^n_{i=1}Y_{ki}v_{i}\overline{v_k}  \right]=\\
=\frac{1}{2}(s_k+\overline{s_k})=p_k
\end{split}    
\end{align}

What follows are some computations that lead to the explicit form of the SDP relaxation. To resolve many of the equalities, we need the following, which follows by direct computation.
\begin{align}
    \begin{split}
        \left(a_{1}-\iu b_{1}\right)\left(a_{2}-\iu b_{2}\right)\left(a_{3}+\iu b_{3}\right) \\
        = a_{1}a_{2}a_{3}-b_{1}b_{2}a_{3}+a_{1}b_{2}b_{3}+b_{1}a_{2}b_{3} \\
        +\iu\left(a_{1}a_{2}b_{3}-b_{1}b_{2}b_{3}-a_{1}b_{2}a_{3}-b_{1}a_{2}a_{3}\right) \label{eq_simple_complex_formula}
    \end{split}
\end{align}

Due to the symmetry of the constraints, we provide a detailed derivation only considering the potential and active power variables. Define the moment vector as
\begin{align}
\begin{split}
v_{d}\left(v,p\right)=\left(1,v_{1},v_{2},\ldots,v_{2n},p_{1},\ldots,p_{n}\right.,\\
v_{1}^{2},v_{1}v_{2},\ldots,v_{1}v_{2n},v_{1}p_{1},\ldots,v_{1}p_{n},v_{2}^{2}, \\
v_{2}v_{3},\ldots,v_{2}p_{n},v_{3}^{2},\ldots,p_{n}^{2},\ldots,\left.p_{n}^{d}\right).
\end{split}
\end{align}

Setting $d=2$ (note that $d = 1$ leads to the localization matrix being trivial), set sequence $y_d(v,p)$ satisfying $$y_{(i,j,k)}=v_iv_jp_k,$$ where $(i,j,k)$ is a multi-index that has ones on the $i$-th, $j$-th and $k$-th positions and zeros everywhere else. Define
\begin{equation}
g_{p,k}(v, p_k)=\Tr\left(Y_{k,p}vv^{*}\right)-p_{k}\text{ for each }k\in I. \label{eq_constraints_for_pop}
\end{equation}
and
$d_{p,k} = \left\lceil\frac{1}{2} \deg\left[g_{p,k}\right]\right\rceil  = 1$ for every $k$. Before resolving $M_{d-d_{p,k}}\left(g_{p,k},y\right)=M_{1}\left(g_{p,k},y\right)$ compute
\begin{align}
\begin{split}
p_{k}= & \sum_{i=1}^{n}\left[Y_{ki}^{\Re}v_{i}^{\Re}v_{k}^{\Re}-Y_{ki}^{\Im}v_{i}^{\Im}v_{k}^{\Re}\right. \\
& \left.+Y_{ki}^{\Re}v_{i}^{\Im}v_{k}^{\Im}+Y_{ki}^{\Im}v_{i}^{\Re}v_{k}^{\Im}\right]
\end{split},
\end{align}
where \eqref{eq_simple_complex_formula} was used. The moment matrix then equals 
\begin{align}
\begin{split}
M_{1}\left(g_{p,k},y_d(v,p)\right)_{\alpha\beta}=\\
=\sum_{i=1}^{n}\left[Y_{ki}^{\Re}y_{\left(i,k\right)+\alpha+\beta}-Y_{ki}^{\Im}y_{\left(i+n,k\right)+\alpha+\beta} +\right. \\ 
\left.+\,Y_{ki}^{\Re}y_{\left(i+n,k+n\right)+\alpha+\beta}+Y_{ki}^{\Im}y_{\left(i,k+n\right)+\alpha+\beta}\right] - \\
-\,y_{\left(2n+k\right)+\alpha+\beta},\label{eq_definition_of_moment_order_1}
\end{split}    
\end{align}


where $\left(i,k\right)$ is used to denote a multi-index that has ones on the $i$-th and $k$-th positions and zeros everywhere else. What follows is a consequence of imposing the equality $g_k = 0$. 

\begin{lemma}\label{lem_imposing_both_ineqs}
Let $M_{1}\left(g_{p,k},y\right)$ be defined by \eqref{eq_definition_of_moment_order_1} and suppose that 
\begin{equation}
M_{1}\left(g_{p,k},y\right)\succeq0\text{ and }M_{1}\left(g_{p,k},y\right)\preceq0,\label{eq_both_conditions}
\end{equation}
which corresponds to imposing $g_k = 0$ in the original polynomial optimization problem. Then the matrix $M_{1}\left(g_{p,k},y\right)$ has only zero eigenvalues and thus 
\begin{equation}
M_{1}\left(g_{p,k},y\right)_{\alpha,\beta} = 0\label{eq_moment_is_zero}
\end{equation}
for each $k \in \left\{ 1,2,\ldots,n\right\} $ and all multi-indices $\alpha$ and $\beta$.
\end{lemma}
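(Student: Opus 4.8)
The plan is to recognize that Lemma~\ref{lem_imposing_both_ineqs} is, at its core, the elementary linear-algebra fact that a real symmetric matrix which is simultaneously positive and negative semidefinite must vanish; most of the work is just assembling the pieces already set up above. \textbf{Step 1 (symmetry and real entries).} First I would note that $M_{1}\left(g_{p,k},y\right)$ is a real symmetric matrix: it is a localizing matrix, and the defining formula $M_{d}\left(a(x),y\right)_{\alpha\beta}=L_{y}\left(x^{\alpha}x^{\beta}a(x)\right)$ is manifestly invariant under swapping $\alpha$ and $\beta$, just as for the ordinary moment matrix; its entries are real because $g_{p,k}$ in \eqref{eq_constraints_for_pop}, once the complex voltages are written in rectangular coordinates, has real coefficients, as the explicit expression \eqref{eq_definition_of_moment_order_1} shows. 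Consequently $M_{1}\left(g_{p,k},y\right)$ admits a spectral decomposition $M_{1}\left(g_{p,k},y\right)=Q\,\mathrm{diag}(\lambda_{1},\dots,\lambda_{s})\,Q^{T}$ with $Q$ orthogonal and all $\lambda_{j}\in\mathbb{R}$.

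\textbf{Step 2 (eigenvalues are forced to zero).} The hypothesis \eqref{eq_both_conditions} asserts $M_{1}\left(g_{p,k},y\right)\succeq0$ and $M_{1}\left(g_{p,k},y\right)\preceq0$; the first gives $\lambda_{j}\ge0$ for all $j$, and the second (equivalently, $-M_{1}\left(g_{p,k},y\right)\succeq0$) gives $\lambda_{j}\le0$ for all $j$. Hence $\lambda_{j}=0$ for every $j$, which is the first claim of the lemma. \textbf{Step 3 (conclude entrywise).} Substituting $\lambda_{j}=0$ into the spectral decomposition yields $M_{1}\left(g_{p,k},y\right)=Q\cdot0\cdot Q^{T}=0$, hence $M_{1}\left(g_{p,k},y\right)_{\alpha,\beta}=0$ for all multi-indices $\alpha,\beta$, which is \eqref{eq_moment_is_zero}; since nothing in the argument used anything about $k$ beyond symmetry, this holds for every $k\in\{1,2,\ldots,n\}$. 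As an alternative to invoking the spectral theorem, one can argue via the trace: $M\succeq0$ and $M\preceq0$ force $\mathrm{Tr}(M)\ge0$ and $\mathrm{Tr}(M)\le0$, so $\mathrm{Tr}(M)=0$; a positive semidefinite matrix with zero trace has zero diagonal, and then positive semidefiniteness of the $2\times2$ principal submatrices forces every off-diagonal entry to vanish as well.

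\textbf{Main obstacle.} There is essentially no analytic difficulty here; the only thing requiring care is the bookkeeping that makes Step~1 legitimate, namely that in the rectangular-coordinate formulation \eqref{eq_definition_of_moment_order_1} the localizing matrix is genuinely symmetric with real entries, and that the pair of semidefinite conditions in \eqref{eq_both_conditions} is the faithful order-$d$ SDP surrogate for the polynomial equality $g_{p,k}=0$ (i.e.\ that splitting it into $g_{p,k}\ge0$ and $-g_{p,k}\ge0$ and relaxing each introduces exactly the two stated semidefinite constraints). Both points follow directly from the definitions recalled above, so the lemma follows immediately.
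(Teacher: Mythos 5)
Your proposal is correct and follows essentially the same route as the paper: both arguments read the two semidefiniteness conditions \eqref{eq_both_conditions} as forcing all eigenvalues of the symmetric localizing matrix to vanish, and then conclude that a symmetric matrix with only zero eigenvalues is the zero matrix, giving \eqref{eq_moment_is_zero}. Your additional bookkeeping on symmetry/realness and the alternative trace argument are fine elaborations but do not change the substance.
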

\begin{proof}
Interpreting \eqref{eq_both_conditions} in terms of eigenvalues implies that $M_{1}\left(g_{p,k},y\right)$ must have only zero eigenvalues. The only symmetric matrix that has all zero eigenvalues is the zero matrix leading to \eqref{eq_moment_is_zero}.
\end{proof}
An immediate consequence of the preceding lemma is the following claim.
\begin{lemma}\label{lem_slater_fail}
If $M_{1}\left(g_{p,k},y\right) = 0$ and the admittance matrix is non-zero, the interior of the set of feasible solutions is the empty set i.e. Slater's condition does not hold for any feasible set containing constraints implying this condition.
\end{lemma}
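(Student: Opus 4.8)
The plan is to argue by contradiction: suppose some feasible set $F$ (one whose defining constraints include the localizing-matrix conditions $M_1(g_{p,k},y)\succeq 0$ and $M_1(g_{p,k},y)\preceq 0$, equivalently $g_{p,k}=0$) has a nonempty interior, i.e.\ Slater's condition holds. By Lemma~\ref{lem_imposing_both_ineqs}, every feasible $y$ forces the entry identity $M_1(g_{p,k},y)_{\alpha\beta}=0$ for all multi-indices $\alpha,\beta$ with $|\alpha|,|\beta|\le 1$. First I would single out the most informative entries of this matrix: taking $\alpha=\beta=0$ gives the scalar equation $p_k=\sum_i[\,Y^{\Re}_{ki}y_{(i,k)} - Y^{\Im}_{ki}y_{(i+n,k)} + Y^{\Re}_{ki}y_{(i+n,k+n)} + Y^{\Im}_{ki}y_{(i,k+n)}\,] - y_{2n+k}=0$, and more importantly the entries with $|\alpha|+|\beta|=1$ or $2$ produce, for every coordinate direction $e_r$, linear relations among the degree-$3$ moments $y_{(i,k)+e_r}$, $y_{(i+n,k)+e_r}$, etc.\ These relations are nontrivial precisely because the admittance matrix is nonzero: for at least one pair $(k,i)$ some $Y^{\Re}_{ki}$ or $Y^{\Im}_{ki}$ is nonzero, so the corresponding row of the linear system on the moment vector $y$ is a nonzero functional.

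Second, I would translate "nonempty interior of $F$'' into a statement about the moment vector $y$ ranging over an open set in $\mathbb{R}^{s(2d)}$ (or at least over a relatively open set of positive dimension in the affine hull of the remaining constraints). The point is that the equations $M_1(g_{p,k},y)_{\alpha\beta}=0$ are affine-linear in $y$ and, by the previous paragraph, at least one of them is a nonconstant affine function of $y$; its zero set is therefore a proper affine subspace (a hyperplane, or lower-dimensional), which has empty interior in $\mathbb{R}^{s(2d)}$. Hence $F$, being contained in this zero set, cannot contain any Euclidean ball, contradicting Slater. I would phrase the conclusion so that it covers "any feasible set containing constraints implying this condition'': the argument only uses that $F$ is contained in $\{y : M_1(g_{p,k},y)=0\}$, which is exactly what Lemma~\ref{lem_imposing_both_ineqs} guarantees once the two-sided semidefinite constraint is present.

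The main obstacle I anticipate is being careful about \emph{which} ambient space the interior is taken in. The moment SDP \eqref{eq_moment_problem} already forces $y_{0\cdots0}=1$ and imposes many other equality constraints (all the quadratic physical relations, the idempotency relations $\beta^2=\beta$, etc.), so "interior of the feasible set'' can only sensibly mean interior relative to the affine subspace cut out by those equalities, or interior of the \emph{spectrahedron} $\{y : M_d(y)\succeq 0,\ \dots\}$. In either reading the argument still goes through, but I need to state it so that the nonzero affine functional coming from $M_1(g_{p,k},y)=0$ is not already implied by the other equalities — which is true because those other equalities constrain $y$ on monomials of degree $\le 2$ and $\le 4$ in ways that do not pin down the specific degree-$3$ combinations appearing in \eqref{eq_definition_of_moment_order_1}; alternatively, and more robustly, I would simply note that $M_1(g_{p,k},y)=0$ as a \emph{matrix} equation is equivalent to the polynomial identity $g_{p,k}\equiv 0$ holding on the support of the representing measure, which (admittance nonzero) is a nontrivial algebraic constraint and hence its solution set has empty interior as a subset of the positive-semidefinite cone. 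A short remark to this effect, plus the contradiction with Slater, completes the proof.
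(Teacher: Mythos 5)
Your argument is essentially the paper's own proof: the paper likewise observes that, because the admittance matrix is nonzero, the matrix equation $M_{1}\left(g_{p,k},y\right)=0$ imposes at least one non-trivial element-wise (affine) equality constraint on the moment variables, so the feasible set has codimension at least one and therefore empty interior, which is exactly the failure of Slater's condition. Your extra care about which ambient space the interior is taken in, and about the constraint not being implied by the remaining equalities, elaborates on the paper's one-line justification but does not change the route.
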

\begin{proof}
Since \eqref{eq_moment_is_zero}  is equivalent to imposing at least one non-trivial element-wise equality constraint on the space of positive symmetric moment matrices (due to the admittance matrix not being zero), the co-dimension of the feasible set is at least $1$. 
\end{proof}
Using the preceding Lemma, it is clear to see that a reformulation of the original polynomial optimization problem is for the tracking of the SDP trajectories to be meaningful. Recalling the constraints \eqref{eq_constraints_for_pop} define a new set of constraints as
\begin{equation}
g_{k}\leq\delta,\quad g_{k}\geq\delta,    
\end{equation}
where $\delta > 0$ is an arbitrary slack variable that prevents the pathological situation described in Lemmas \ref{lem_imposing_both_ineqs} and \ref{lem_slater_fail}.


\subsection{Positive Results for a Related Problem}

The related problem of alternating current optimal power flow (ACOPF) can be cast as a polynomial optimization and has been studied extensively \cite{liu2023optimal, Lavaei12, best19}. 

It is known that under mild assumptions there exists a $k \in \mathbb{N}$ such that the $k$-th SDP relaxation gives the exact solutuion of the problem. Furthermore, it has been shown for a particular form of the ACOPF problem that the first level of the SDP relaxation hierarchy ($k = 1$) provides the exact solution \cite{Lavaei12}.

Recent research \cite{liu2017,Bellon_2024,bellon2024time}  studied the dynamics of time-dependent semi-definite programs and time-dependent polynomial optimization problems. These techniques have been applied to the related problem of optimal power flow with considerable success \cite{liu2023optimal}. This motivates us to apply these tracking techniques to state estimation problems.

\subsection{Positive Results for the State Estimation}\label{sec_time_var_SDP}

 We first present a brief overview of the theoretical results, which are key for understanding the dynamics of time-varying SDPs, then the relevance of this theory to the specific problem of state estimation is discussed.

\subsection{An Overview}\label{sec_trajectory_tracking_SDP_genral}
In this section $m, n \in \mathbb{N}$ are arbitrary instead of having the meaning of branches and nodes. Recently, the trajectories of time-varying semi-definite programs (SDPs) have been considered \cite{Bellon_2024}. More specifically, one considers the time-varying problem:
\begin{align}
\begin{split}
\underset{X\in\mathbb{S}^{n}}{\min}\left\langle C\left(t\right),X\right\rangle \\
\text{s.t. }\mathcal{A}\left(t\right)\left[X\right]=b\left(t\right) \\
X\succeq0,\label{eq_time_dep_sdp_first}
\end{split}
\end{align}
where $\mathbb{S}^n$ denotes the set of symmetric $n$ by $n$ matrices, $b\left(t\right) \in \mathbb{R}^m$ for each $t$, $\left\langle \cdot,\cdot\right\rangle $ is an inner product on the space of matrices, $\mathcal{A}\left(t\right)\left[X\right]=\left(\left\langle A_{1}\left(t\right),X\right\rangle ,\left\langle A_{2}\left(t\right),X\right\rangle ,\ldots,\left\langle A_{m}\left(t\right),X\right\rangle \right)$ and $C\left(t\right)\in\mathbb{S}^{n}$  for each $t$ is given. The realization of \eqref{eq_time_dep_sdp_first} for the state estimation problem would be the encoding the objective function in the scalar product term $\left\langle C\left(t\right),X\right\rangle$ and the measurement variables along with Kirchhoff's and Ohm's laws being represented by $\mathcal{A}$ and $b$. The corresponding trajectories of feasible solutions, optima, and optimizers are:
\begin{align}
\begin{split}
\mathcal{P}\left(t\right)&=\left\{ X\in\mathbb{S}^{n}:\mathcal{A}\left(t\right)\left[X\right]=b\left(t\right),X\succeq0\right\} , \\
p^{*}\left(t\right)&=\underset{X\in\mathbb{S}^{n}}{\min}\left\{ \left\langle C\left(t\right),X\right\rangle :X\in\mathcal{P}\left(t\right)\right\},  \\
\mathcal{P}^{*}\left(t\right)&=\left\{ X\in\mathcal{P}\left(t\right):\left\langle C\left(t\right),X\right\rangle =p^{*}\left(t\right)\right\}, 
\end{split}    
\end{align}
respectively.
We say that the time-varying problem  \eqref{eq_time_dep_sdp_first} satisfies the basic regularity conditions if
\begin{itemize}
    \item the interior of the feasible set is non-empty i.e. if there exists a strictly feasible point (Slater's condition),
    \item if the data vectors and matrices dependent on time $C\left(t\right),\mathcal{A}\left(t\right)$ and $b\left(t\right)$ are continuous with respect to any given matrix or vector norm on the codomain (continuity is independent of norm choice),
    \item the constraints satisfy the so called linear independence constraint qualification (LICQ), which is satisfied if the matrices $A_{i}$ (forming the operator $\mathcal{A}$) are linearly independent, which makes $\mathcal{A}$ surjective.
\end{itemize}

A complete classification of trajectory points of a time-varying SDP  has been developed \cite{Bellon_2024}.
The Appendix \ref{sec_trajectory_tracking_SDP_genral},
contains the detailed classification. Under the basic regularity conditions, the following pathological behaviors do not occur \cite{Bellon_2024}.
\begin{itemize}
    \item Any neighborhood of a point may contain many points at which multiple solutions to the problem exist (non-isolated isolated multi values).
    \item A trajectory may split into multiple trajectories at a point while preserving continuity (continuous bifurcation).
    \item Non-continuous split into multiple trajectories (irregular accumulation point).
\end{itemize}
Each of the pathological behaviors listed above would introduce major issues to any tracking algorithm that uses preceding solutions to compute the current solution within the trajectory. On the other hand, the absence of all of these behaviors (which is guaranteed by the fulfilment of the regularity conditions) allows for efficient tracking algorithms \cite{bellon2024time}. 

\subsection{Application to the Steady State Estimation Problem}

Assuming that the system evolves and is tracked at discrete times $T=\left\{ t_{0},t_{1},\ldots,t_{\text{final}}\right\} $ one can view the state estimation problem (\ref{eq_objective_of_poly_opt}-\ref{eq_pop_formulation_0_end}) as a polynomial optimization problem with a time parameter, which can be written in general as
\begin{equation}\label{eq_time_var_poly}
\underset{x\in\mathbb{R}^{N}}{\min}f^{j}\left(x\right)\text{ s.t. }g_{i}^{j}\left(x\right)\geq0,\forall i\in I,    
\end{equation}
where $j$ represents the time level index, where $f^j$ and $g_i^j$ are polynomails in $x$ for each $i$ and $j$. As was mentioned before, it is possible to find the exact solution of \eqref{eq_time_var_poly} by solving an SDP relaxation. Instead of solving each of these instances separately, one may study the trajectories of the solutions and develop tracking methods \cite{liu2023optimal,bellon2024time}.

To apply the general-purpose tracking method \cite{bellon2024time}, one needs to ensure the basic regularity condition. Slater's condition is satisfied, as suggested in Section \ref{app_derivation_of_SDP}. (This is independent of generic methods of obtaining Slater's point \cite{kungurtsev2020twostep} based on facial reduction. These may need to be applied also in the case that the matrices $A_i$ forming the operator $\mathcal{A}$ are linearly dependent \cite{kungurtsev2020twostep}.) 
The continuity of the data is impossible to guarantee in neither normal operations (e.g., deadband reached) nor outage scenarios (e.g., a short circuit). The tracking of the trajectory of solutions has to utilize change-point detection methods \cite{aminikhanghahi2017survey} whenever there are discontinuities.

A wide variety of tracking methods have been developed over the years. The most straightforward method would be to warm-start repeated solution of the problem for each time instant \cite{Gondzio1998WarmSO}. 
Such tracking methods typically use the interior-point method and a small number of iterations and result in solutions close to machine precision, but the savings of computation time obtained by warm-starting are modest. 

Using a path-following predictor-corrector method is another way to make use of the time-varying dynamics \cite{bellon2024time}. These methods focus on solving a convex optimization problem first, which gives a predictor of the evolution. The SDP instance at current time is used to correct the predictor. 
Another path-following method \cite{liu2023optimal}, that can be utilized even for non-convex problems under certain assumptions, is one that relies on the Polyak-Lojasiewicz (PL) inequalities \cite{Polyak63}. These inequalities give a growth condition on the gradient around local minima, which results in certain guarantees of global optimality holding when using gradient or Newton based methods. Note that there is no relationship between functions convex and functions that fulfill PL \cite{Bolte20}, although there is some indication \cite{best19} that problems with non-convex transmission constraints satisfy the PL inequality. 

\section{Conclusion}
We have revisited the problem of power system steady-state estimation, considering the need for robust statistics, the non-convex transmission constraints, and the need to track trajectories of optimal solutions as closely as possible. 
Starting with a commonly utilized 
structure of the measurement chain (involving SCADA and PMU measurements), we derive appropriate noise models and propose a robust estimator in the so-called Huber model of robust statistics. We show that off-the-shelf branch-and-cut solvers can handle both the robust aspects and the rectangular power-voltage formulation of the transmission constraints in a single mixed-integer quadratically constrained quadratic program (MIQCQP).  

Our experiments on standard IEEE test cases demonstrate the benefits of an estimate in the Huber model with full-fledged non-convex transmission constraints. Specifically, cumulative errors expressed in the $d_2(v_\text{est})$ distance from the true state can be reduced by an order of magnitude on average over the tested problems, compared to the use of a standard quasi-Newton method on networks with many faulty measurements.


We believe that this perspective may spur further research in state estimation, whose reliability is crucial for the operations management in transmission systems. 


\bibliographystyle{ieeetr}
\bibliography{ref}


\begin{IEEEbiographynophoto}
{Pavel Rytir} received his Ph.D. degree in computer science from Charles University in Prague in 2013. He has been a senior researcher at the Czech Technical University since 2017.
\end{IEEEbiographynophoto}
\begin{IEEEbiographynophoto}
{Ales Wodecki} received his Ph.D. degree in mathematical engineering from the Czech Technical University in Prague in 2023. He has been a senior researcher at the Czech Technical University since 2023.
\end{IEEEbiographynophoto}
\begin{IEEEbiographynophoto}
{Martin Malachov} received his Ph.D. degree in mathematical physics from the Czech Technical University in Prague in 2023. He has been a senior specialist at the Czech Transmission System Operator since 2023.
\end{IEEEbiographynophoto}
\begin{IEEEbiographynophoto}
{Pavel Baxant} received his M.Sc. equivalent in mathematical physics from the Czech Technical University in Prague in 2016. He has been at 
the Czech Transmission System Operator
since 2017, currently as a senior specialist.
\end{IEEEbiographynophoto}
\begin{IEEEbiographynophoto}
{Premysl Vorac} (Member, IEEE) received his Ph.D. degree in electrical engineering from the University of West Bohemia in Pilsen in 2018. He has been a technical staff member and manager at 
the Czech Transmission System Operator 
since 2015.
\end{IEEEbiographynophoto}
\begin{IEEEbiographynophoto}
{Miloslava Chladova} (Member, IEEE) received her Ph.D. equivalent in power systems engineering from the Czech Technical University in Prague in 1993. She has been a technical staff member at 
the Czech Transmission System Operator since 2006.
\end{IEEEbiographynophoto}
\begin{IEEEbiographynophoto}
{Jakub Marecek} (Member, IEEE) received a Ph.D. degree in computer science from the University of Nottingham, Nottingham, U.K., in 2012. He has been a faculty member at the Czech Technical University in Prague, the Czech Republic, since 2020. Previously, he had also worked in two start-ups, at ARM Ltd., at the University of Edinburgh, at the University of Toronto, at IBM Research – Ireland, and at the University of California, Los Angeles. His research interests include the design and analysis of algorithms for optimisation and control problems across a range of application domains.
\end{IEEEbiographynophoto}

\vfill

\clearpage 
\appendix 

\subsection{Key Concepts in the Trajectory Tracking of Time-varying SDPs}\label{sec_trajectory_tracking_SDP_genral}
Recently, the trajectories of time-varying semi-definite programs (SDPs) have been charaterized \cite{Bellon_2024}. More specifically, when one considers the primal 
\begin{align}
\begin{split}
\underset{X\in\mathbb{S}^{n}}{\min}\left\langle C\left(t\right),X\right\rangle \\
\text{s.t. }\mathcal{A}\left(t\right)\left[X\right]=b\left(t\right) \\
X\succeq0,\label{eq_time_dep_sdp}
\end{split}
\end{align}
where $\mathbb{S}^n$ denotes the set of symmetric $n$ by $n$ matrices, $b\left(t\right) \in \mathbb{R}^m$ for each $t$, $\left\langle \cdot,\cdot\right\rangle $ is an inner product on the space of matrices and $\mathcal{A}\left(t\right)\left[X\right]=\left(\left\langle A_{1},X\right\rangle ,\left\langle A_{2},X\right\rangle ,\ldots,\left\langle A_{m},X\right\rangle \right)$. 
In order to meaningfully describe the trajectories of the time dependant SDP \eqref{eq_time_dep_sdp} one makes the following definitions
\begin{align}
\begin{split}
\mathcal{P}\left(t\right)=\left\{ X\in\mathbb{S}^{n}:\mathcal{A}\left(t\right)\left[X\right]=b\left(t\right),X\succeq0\right\} , \\
p^{*}\left(t\right)=\underset{X\in\mathbb{S}^{n}}{\min}\left\{ \left\langle C\left(t\right),X\right\rangle :X\in\mathcal{P}\left(t\right)\right\}  \\
\mathcal{P}^{*}\left(t\right)=\left\{ X\in\mathcal{P}\left(t\right):\left\langle C\left(t\right),X\right\rangle =p^{*}\left(t\right)\right\}, 
\end{split}    
\end{align}
where $\mathcal{P}^{*}\left(t\right)$ represents the set-valued map that represents all the solutions of \eqref{eq_time_dep_sdp} at time $t$. 

The usual definition of continuity of real-valued maps does not directly extend to set-valued maps. For this reason, Painleve-Kuratowski continuity is considered. To properly define the notion, we introduce inner and outer limits.

\begin{definition}[inner and outer limits]\label{def_inner_outer_limits}
Let $F$ be a set valued function from $T \subset \mathbb{R}$ to the set of $n$ by $n$ matrices denoted by $X$. Then the inner and outer limits at point $t_0 \in T$ is defined as
\begin{align}
\begin{split}
\underset{t\rightarrow t_{0}}{\liminf}F\left(t\right)=\left\{ x_{0}:\forall\left\{ t_{k}\right\} _{k=1}^{\infty}\subset T\text{ such that }t_{k}\rightarrow t_{0},\right. \\  \left.\exists\left\{ x_{k}\right\} _{k=1}^{\infty}\subset X,x_{k}\rightarrow x_{0},x_{k}\in F\left(t_{k}\right)\right\}, \\  
\underset{t\rightarrow t_{0}}{\liminf}F\left(t\right)=\left\{ x_{0}:\exists\left\{ t_{k}\right\} _{k=1}^{\infty}\subset T\text{ such that }t_{k}\rightarrow t_{0},\right. \\
\left.\exists\left\{ x_{k}\right\} _{k=1}^{\infty}\subset X,x_{k}\rightarrow x_{0},x_{k}\in F\left(t_{k}\right)\right\} 
\end{split}    
\end{align}
respectively.
\end{definition}

\begin{definition}[continuity of a set valued map]\label{def_continuity}
Let $F$ be a set valued map as in Definition \ref{def_inner_outer_limits}. We call $F$ inner, outer semi-continuous at $t_0$ if
\begin{align}
\begin{split}
\underset{t\rightarrow t_{0}}{\liminf}F\left(t\right)\supset F\left(t_{0}\right), \\
\underset{t\rightarrow t_{0}}{\liminf}F\left(t\right)\subset F\left(t_{0}\right)
\end{split}    
\end{align}
hold, respectively. $F$ is called continuous (Painleve-Kuratowski continuous) at $t_0$ if it is both inner and outer semi-continuous at $t_0$.
\end{definition}
Note that if $F$ happens to be single valued, Definition \ref{def_continuity} coincides with the usual definition of continuity. Additionally, we call a set valued map differentiable at point $t_0$ if it is single valued on some neighborhood of $t_0$ and has a derivative at $t_0$ in the ordinary sense. Lastly, we introduce the basic regularity conditions.
\begin{definition}[basic regularity conditions]\label{def_basic_reg_conds}
We say that a SDP of the form  \eqref{eq_time_dep_sdp} satisfies the basic regularity conditions if
\begin{itemize}
    \item the interior of the feasible set is non-empty i.e. if there exists a strictly feasible point (Slater's condition),
    \item if the data vectors and matrices dependent on time $C\left(t\right),\mathcal{A}\left(t\right)$ and $b\left(t\right)$ are continuous with respect to any given matrix or vector norm on the codomain (continuity is independent of norm choice),
    \item the constraints satisfy the so called linear independence constraint qualification (LICQ), which is satisfied if the matrices $A_{i}$ (forming the operator $\mathcal{A}$) are linearly independent, which makes $\mathcal{A}$ surjective.
\end{itemize}
\end{definition}

With these preliminaries in place, we may introduce a relevant classification of trajectory points of a time-varying SDP in order to be able to link the behavior of the SDP in subsequent time instances \cite{Bellon_2024}.
\begin{theorem}[the classification of trajectories of time-varying SDPs]\label{thm_the_classification_sdp}
Assume that the time-varying SDP satisfies the basic regularity conditions as per Definition \ref{def_basic_reg_conds}. Furthermore, assume that the data are polynomial functions of time. Then the trajectory is comprised only of points that may be characterized as follows.
\begin{itemize}
    \item (regular points) The trajectories are single valued on some neighborhood of the point and differentiable at the point.
    \item (non-differentiable points) The trajectories are single valued on some neighborhood of the point and non-differentiable at the point.
    \item (discontinuous isolated multi-valued points) The trajectories are single valued on some neighborhood of the point excluding the point itself and multivalued at the point.
\end{itemize}
\end{theorem}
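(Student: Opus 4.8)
The plan is to pass to the graph $\Gamma=\{(t,X):t\in T,\ X\in\mathcal{P}^{*}(t)\}$ of the optimizer correspondence and to exploit the polynomial (hence semialgebraic) dependence on $t$: once $\Gamma$ is known to be semialgebraic, a cell decomposition of $T$ forces the three listed behaviours and rules out the earlier pathologies. First I would record what the basic regularity conditions buy. For each fixed $t$, Slater's condition gives strong duality and attainment of the primal optimum, so $\mathcal{P}^{*}(t)$ is a nonempty convex face of the spectrahedron $\mathcal{P}(t)$; continuity of $C(\cdot),\mathcal{A}(\cdot),b(\cdot)$ together with LICQ (Robinson-type stability of the feasible set) makes $\mathcal{P}(\cdot)$ Painleve--Kuratowski continuous, and a Berge-type parametric argument then yields continuity of $p^{*}(\cdot)$ and outer semicontinuity (closed graph) of $\mathcal{P}^{*}(\cdot)$.

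Next I would establish semialgebraicity of $\Gamma$. The matrices $C(t),A_{i}(t)$ and the vector $b(t)$ are polynomial in $t$, and $X\succeq0$ is a semialgebraic condition on $X$ (nonnegativity of all principal minors); hence the feasible graph $\{(t,X):\mathcal{A}(t)[X]=b(t),\ X\succeq0\}$ is semialgebraic, the value $p^{*}(t)=\inf\{\langle C(t),X\rangle:\mathcal{A}(t)[X]=b(t),\ X\succeq0\}$ is a semialgebraic function of $t$ by Tarski--Seidenberg, and therefore $\Gamma$, being the set of feasible $(t,X)$ with $\langle C(t),X\rangle=p^{*}(t)$, is semialgebraic as well.

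Then I would apply the cell decomposition theorem to $\Gamma$ over the one-dimensional parameter space $T$: there is a finite partition of $T$ into points $\tau_{1},\dots,\tau_{r}$ and open intervals $I_{1},\dots,I_{s}$ such that over each $I_{j}$ the fibre $\mathcal{P}^{*}(t)$ depends semialgebraically on $t$ and is a finite union of graphs of semialgebraic functions. The decisive step is to show that over each open interval the fibre is a single point. For this I would argue that the set of $t$ at which the optimal face has positive dimension is a semialgebraic subset of $T$ with empty interior: at a $t$ where strict complementarity holds the optimizer is unique, LICQ (primal nondegeneracy) confines the complementarity defect to a semialgebraic subset of $T$ with empty interior, and such a subset of a one-dimensional set is finite. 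A single-valued semialgebraic map on an interval is real-analytic off a finite subset, so each $I_{j}$ contributes regular points together with finitely many non-differentiable points. The finitely many partition points $\tau_{i}$, augmented by the finitely many corners just found, are isolated, and at such a point the one-sided analytic branches either fail to meet or the fibre genuinely jumps, producing the discontinuous isolated multi-valued points. Refining the partition so that all these exceptional points are isolated yields precisely the stated trichotomy, and in particular excludes non-isolated multi-valuedness, continuous bifurcation, and irregular accumulation points.

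The main obstacle I anticipate is this genericity-of-uniqueness step --- ruling out whole subintervals on which $\mathcal{P}^{*}(t)$ is multi-valued. Semialgebraicity by itself only prevents an accumulation of exceptional parameters; to preclude an interval of multi-valued fibres one really has to invoke LICQ (equivalently, primal nondegeneracy of the parametric SDP) in tandem with the polynomial data to localize the strict-complementarity failure set. A secondary point requiring care is to distinguish, within an interval on which the optimizer is unique and continuous, the parameters at which the selection is genuinely non-differentiable from those at which it is differentiable, i.e.\ to show that the former form an isolated set.
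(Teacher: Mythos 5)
You should first note that the paper does not actually prove this theorem: it is restated verbatim from \cite{Bellon_2024}, so the only proof to compare against is the one in that reference. Your semialgebraic skeleton --- the graph of the solution map is semialgebraic by Tarski--Seidenberg because the data are polynomial in $t$, and a cell decomposition of the one-dimensional parameter set then yields finitely many exceptional parameters with analytic single-valued selections on the complementary intervals --- is indeed the backbone of that style of argument, and it does correctly exclude irregular accumulation points.

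The genuine gap is exactly the step you flag yourself: single-valuedness of $\mathcal{P}^{*}(t)$ over each open cell. LICQ as defined in the paper is linear independence of the matrices $A_{i}(t)$, i.e.\ surjectivity of the constraint operator; it is a property of the constraints alone and implies neither primal nondegeneracy at an optimizer nor strict complementarity, which are conditions on a particular optimal primal--dual pair in the sense of Alizadeh--Haeberly--Overton. Hence your claim that ``LICQ confines the complementarity defect to a semialgebraic subset of $T$ with empty interior'' does not follow, and without it nothing prevents the optimal face from having positive dimension on an entire interval. Concretely, take $C(t)\equiv 0$, or any polynomial $C(t)$ that stays in the normal cone of a fixed positive-dimensional face of a constant feasible spectrahedron: Slater's condition, continuity, LICQ and polynomiality of the data all hold, yet $\mathcal{P}^{*}(t)$ is multi-valued for every $t$ --- precisely the non-isolated multi-valued behavior the theorem is meant to exclude. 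In \cite{Bellon_2024} this is handled by a genericity argument on the data (uniqueness and strict complementarity of solutions hold outside a lower-dimensional set of problem data), not by the basic regularity conditions alone, so your proof cannot close as written without importing such a genericity hypothesis. A secondary inaccuracy: Slater's condition for the primal yields zero duality gap and \emph{dual} attainment, not primal attainment, so even nonemptiness of $\mathcal{P}^{*}(t)$ requires an extra argument (e.g.\ compactness of the feasible set or strict dual feasibility).
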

It is important to note that the aforementioned theorem guarantees that the following pathological behaviors do not occur \cite{Bellon_2024}.
\begin{itemize}
    \item Any neighborhood of a point may contain many points at which multiple solutions to the problem exist (non-isolated multi values).
    \item A trajectory may split into multiple trajectories at a point while preserving continuity (continuous bifurcation).
    \item Non-continuous split into multiple trajectories (irregular accumulation point).
\end{itemize}
Each of the pathological behaviors listed above would introduce major issues to any tracking algorithm that uses a solution at a previous time to inform the solution at current time. Therefore, it is purposeful to discuss the conditions under which these conditions are satisfied for the problem of state estimation in order to make full use of this classification.

\end{document}